\def\codim{\operatorname{codim}}
\def\Gr{\operatorname{Gr}}
\def\im{\operatorname{Im}}
\def\re{\operatorname{Re}}
\def\CC{\mathbb{C}} 
\def\DD{\mathbb{D}} 
\def\BB{\mathbb{B}} 
\def\RR{\mathbb{R}} 
\def\TT{\mathbb{T}} 
\def\CDD{\overline{\DD}} 
\def\OO{\mathcal{O}} 
\def\RE{\textnormal{Re}\,} 
\def\IM{\textnormal{Im}\,} 
\def\LEBT{\mathcal{L}^{\TT}} 
\def\PDER#1#2{\frac{\partial #1}{\partial #2}} 
\def\DDER#1#2{\frac{d #1}{d #2}} 
\def\HH{\mathcal{H}} 
\def\MMM{\mathcal{M}} 
\def\BLHL#1{\bar\lambda h_{#1}(\lambda)} 
\def\BLHSL#1{\bar\lambda h_{#1}^*(\lambda)} 
\def\LBHSL#1{\lambda\overline{h_{#1}^*(\lambda)}} 
\newtheorem{cor}{Corollary}[section]
\newtheorem{lem}[cor]{Lemma}
\newtheorem{obs}[cor]{Observation}
\newtheorem{prop}[cor]{Proposition}
\newtheorem{thm}[cor]{Theorem}
\theoremstyle{remark}
\newtheorem{rem}[cor]{Remark}
\theoremstyle{definition}
\newtheorem{ex}[cor]{Example}
\title[Complex geodesics and $\mathbb C$-convexity]{Complex geodesics in convex domains and $\mathbb C$-convexity of semitube domains}
\author{Sylwester Zaj\k{a}c}
\address{Institute of Mathematics of the Polish Academy of Sciences, \'{S}niadeckich 8, 00-656 Warszawa, Poland}
\email{sylwester.a.zajac@gmail.com}
\author{Pawe\l{} Zapa\l owski}
\address{Faculty of Mathematics and Computer Science, Jagiellonian University, \L o\-ja\-sie\-wi\-cza 6, 30-348 Krak\'ow, Poland}
\email{Pawel.Zapalowski@im.uj.edu.pl}
\thanks{The authors are partially supported by the Polish National Science Center (NCN) grant UMO-2014/15/D/ST1/01972}
\subjclass[2010]{32F45, 32A07, 32F17}
\keywords{complex geodesic, convex domain, semitube domain, $\mathbb C$-convexity, linear convexity}
\begin{document}

\begin{abstract}In the paper the complex geodesics of a convex domain in $\CC^n$ are studied. One of the main results of the paper provides certain necessary condition for a holomorphic map to be a complex geodesic for a convex domain in $\CC^n$. The established condition is of geometric nature and it allows to find a formula for every complex geodesic. The $\mathbb C$-convexity of semitube domains is also discussed.
\end{abstract}

\maketitle

\section{Introduction}

The aim of the paper is twofold. First, to provide certain condition which allows to find formulas for all complex geodesics in an arbitrary convex domain in $\CC^n$ (it is the content of Section~\ref{sect:cgicd}) and second, to discuss $\mathbb C$-convexity of semitube domains in $\CC^n$ (see Section~\ref{sect:ccstd}).

\subsection{Complex geodesics} A holomorphic map $\varphi:\DD\longrightarrow D$ is called a \emph{complex geodesic} for a domain $D\subset\CC^n$, if it admits a \emph{left inverse}, that is, a holomorphic function $f:D\longrightarrow\DD$ such that $f\circ\varphi$ is the identity of $\DD$ (for the notation and terminology we refer the reader to the beginning of Section~\ref{sect:p}). These maps, being fundamental objects of research in complex analysis, are precisely the holomorphic isometries between the unit disc $\DD\subset\CC$ equipped with the Poincar\'{e} distance and the domain $D$ equipped with the Carath\'{e}odory pseudodistance (see \cite{vesentini197939}, \cite{vesentini1981375}, \cite{vesentini1982211}). They are inseparably connected with the famous Lempert theorem, guaranteeing that if $D$ is convex, then through an arbitrarily chosen pair of points of $D$ one can pass a complex geodesic (see \cite{lempert1981427} or \cite[Chapter 8]{jarnicki1993} and also \cite{vigue1985345}, \cite{royden1983}).

For an integer $0\leq d\leq n$ denote by $\mathcal{A}_d^n$ the set of all convex domains $D\subset\CC^n$ such that $D$ contains no complex affine lines and a maximal real affine subspace contained in $D$ is of the form $z_0+\lbrace 0\rbrace^{n-d}\times(i\RR)^d$ for some $z_0\in D$. It is clear that every convex domain in $\CC^n$ is affinely equivalent to a Cartesian product of some $\CC^k$ and a convex domain containing no complex affine lines. Therefore, in our investigations of complex geodesics we can restrict to the latter type of domains. Importantly, each of them can be transformed, by a complex affine isomorphism, to an element of $\mathcal{A}_d^n$. In Lemma~\ref{lem_D_fi_miary_graniczne} we will see that if $D\in\mathcal{A}_d^n$ and $\varphi\in\OO(\DD,D)$, then $\varphi$ admits the \emph{boundary measure} (see Section~\ref{sect:p}) of a special form. This justifies introducing the family $\mathcal{A}_d^n$, as well as the fact that it becomes the area of considered domains in majority of our investigations.

Theorem~\ref{th_warunek_konieczny_na_geodezyjna_w_Adn}, which is the main result of Section~\ref{sect:cgicd}, presents certain necessary condition for a holomorphic map $\varphi:\DD\longrightarrow D$ to be a complex geodesic for a domain $D\in\mathcal{A}_d^n$. The condition obtained is of geometric nature. It describes the absolutely continuous part of $\varphi$'s boundary measure in its Lebesgue-Radon-Nikodym decomposition with respect to the Lebesgue measure $\LEBT$ on $\TT$. As for the singular part, Theorem~\ref{th_warunek_konieczny_na_geodezyjna_w_Adn} gives a restraint on it and demonstrates that it can hardly be strengthened. In the latter part of Section~\ref{sect:cgicd} we also prove Theorem~\ref{th_warunek_wystarczajacy_na_geodezyjna_w_Adn}, being an inverse, although not in full extent, of Theorem~\ref{th_warunek_konieczny_na_geodezyjna_w_Adn}.

In the paper we extend the methods from \cite{zajac20151337} and \cite{zajac20161865}, applied there to establish a complete description of all complex geodesics in convex tube domains (that is, precisely those from the family $\mathcal{A}_n^n$). For a domain $D\in\mathcal{A}_d^n$ one can say that there is a kind of 'tube part' of $D$ at the last $d$ coordinates. And in fact, we employ some key argumentations from the aforementioned publications mainly to deal with the last $d$ coordinates of a complex geodesic.

\subsection{$\CC$-convexity}As it was already mentioned, Section~\ref{sect:ccstd} is devoted to the study the notion of $\mathbb C$-convexity in the class of the so-called semitube domains, which we define as follows. Let $\Pi:\mathbb C^n\longrightarrow\mathbb R^{2n-1}$ be defined by
\begin{equation*}
\Pi(z_1,\dots,z_n):=(\re z_1,\im z_1,\dots,\re z_{n-1},\im z_{n-1},\re z_n).
\end{equation*}
The \emph{semitube domain} (\emph{set}) \emph{with the base} $B$ being a domain (set) lying in $\mathbb R^{2n-1}$ ($n>1$) is defined as follows
$$
\mathcal S_B:=\Pi^{-1}(B).
$$
It is a generalization of semitube domains (sets) in $\mathbb C^2$ introduced in \cite{burgues2012685} and studied in \cite{kosinski2015241}. Note that any tube domain (set) is a semitube one. Moreover, any domain of the family $\mathcal A_d^n$ with $d\geq1$ is a semitube domain.

Recall that a domain $D\subset\mathbb C^n$ is called (cf.~\cite{hormander1994}, \cite{andersson2004}) $\mathbb C$-\emph{convex}, if for any affine complex line $L$ such that $L\cap D\neq\varnothing$, the set $L\cap D$ is connected and simply connected. Observe that any convex domain is $\mathbb C$-convex, but the converse does not hold in general.

The notion of $\mathbb C$-convexity plays an important role in geometric function theory. It is a consequence of the celebrated Lempert theorem (cf.~\cite{lempert1981427}) that the property
\begin{equation}\label{eq:lempert}
\textit{the Lempert function and the Carath\'eodory distance of }D\textit{ coincide}
\end{equation}
holds for any bounded $\CC$-convex domain $D$ with $\mathcal C^2$-smooth boundary (cf.~\cite{jacquet2006303}). Any convex domain satisfies (\ref{eq:lempert}) too, since it can be exhausted by smooth bounded convex domains. It is an open problem, whether the property (\ref{eq:lempert}) holds for any bounded $\CC$-convex domain (cf.~Problem 4' in \cite{znamenskij2001123}). The first non-trivial example which supports this conjecture is the symmetrized bidisc. It is a bounded pseudoconvex domain with non-smooth boundary and with the property (\ref{eq:lempert}) (see \cite{costara2004d}) which cannot be exhausted by domains biholomorphic to convex domains (see \cite{edigarian2004189}) and which is $\mathbb C$-convex (\cite{nikolov2008149}). The second (and, up to now, the last one) non-trivial example sharing the above mentioned properties is the tetrablock (cf.~\cite{abouhajar2007717}, \cite{edigarian20131818}, and \cite{zwonek2013159}). To sum up, all known bounded domains with the property (\ref{eq:lempert}) which cannot be exhausted by domains biholomorphic to convex domains turn out to be $\CC$-convex! Thus the $\CC$-convexity seems to be a natural environment for the property (\ref{eq:lempert}) and hence becomes worth studying.

The main result of Section~\ref{sect:ccstd} is Theorem~\ref{thm:ccstd} which shows that the notions of the convexity and the $\CC$-convexity coincide in a large class of semitube domains. To be more specific, we give a simple geometric sufficient condition for a base of the semitube domain, which makes the notions of convexity and $\CC$-convexity of a semitube domain equal. Remark~\ref{rem:example}~(a) shows, that there are $\CC$-convex semitube domains which are not convex. It would be desirable to find the necessary condition for the base which makes equivalence between the convexity and the $\CC$-convexity of a semitube domain. Unfortunately, we have not been able to do this.

\section{Preliminaries}\label{sect:p}

Here is some notation. Throughout the paper $\mathbb D$ denotes the unit disc in the complex plane, by $\mathbb T$ we shall denote the unit circle, $\LEBT$ is the Lebesgue measure on $\TT$, $\|\cdot\|$ is the Euclidean norm in $\CC^n$ and $\mathbb B(a,r)$ stands for the Euclidean ball in $\mathbb C^n$ with center at $a$ and radius $r$. Additionally, by $\BB_{\RR^n}$ we denote the Euclidean unit ball in $\RR^n$ with center at the origin. For $z=(z_1,\dots,z_n)$ and $w=(w_1,\dots,w_n)\in\mathbb C^n$ let $z\bullet w:=\sum_{j=1}^nz_jw_j$ denote the standard dot product in $\mathbb C^n$ and $z=(z',z_n)\in\mathbb C^{n-1}\times\mathbb C$. By $\{e_1,\ldots,e_n\}$ we denote the canonical basis of $\mathbb C^n$ or $\RR^n$. For $A\subset\mathbb C^n$ we shall write $A_*:=A\setminus\{0\}$ and $\partial A$ to denote the boundary of the set $A$. Given two domains $D\subset\CC^n$ and $G\subset\CC^m$ by $\OO(D,G)$ we denote the space of all holomorphic mappings $D\longrightarrow G$. To shorten the notation we often write $f_{j,\ldots,k}:=(f_j,f_{j+1},\ldots,f_k)$ for a tuple $f=(f_1,\ldots,f_n)$ of objects and numbers $1\leq j\leq k\leq n$. Let us also note that the symbol $\bullet$ will be used, in a standard meaning, with measures and functions, e.g.~if $f=(f_1,\ldots,f_n)$ is a tuple of functions and $\mu=(\mu_1,\ldots,\mu_n)$ is a tuple of complex measures, then $f\bullet d\mu$ is the measure $f_1 d\mu_1+\ldots+f_n d\mu_n$, etc. Finally, by $H^1(\DD,\CC^n)$ we denote the family of all holomorphic maps $\DD\longrightarrow\CC^n$ with the components lying in the classical Hardy space $H^1$.

\smallskip
The next lemma, providing a kind of decomposition of $n$-tuples of real measures, plays a crucial role in the investigations made in Section~\ref{sect:cgicd}.

\begin{lem}[\cite{zajac20161865}, Lemma 2.1]\label{lem_rozklad_miary_varrho_d_nu}Let $\mu$ be an $n$-tuple of real Borel measures on $\TT$. Then there exist a unique finite positive Borel measure $\nu$ on $\TT$ singular to $\LEBT$, a unique, up to a set of $\nu$ measure zero, Borel-measurable map $\varrho:\TT\longrightarrow\partial\BB_{\RR^n}$ and a unique, up to a set of $\LEBT$ measure zero, Borel-measurable map $g:\TT\longrightarrow\RR^n$ with components in $L^1(\TT,\LEBT)$ such that
\begin{equation}\label{eq_lem_rozklad_miary_varrho_d_nu_postac_miary}
\mu=g\,d\LEBT+\varrho\,d\nu.
\end{equation}
In particular, $g\,d\LEBT$ and $\varrho\,d\nu$ are, respectively, the absolutely continuous part and the singular part of $\mu$ in its Lebesgue-Radon-Nikodym decomposition with respect to $\LEBT$.
\end{lem}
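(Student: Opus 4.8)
The plan is to prove the statement by reducing it to the classical one-dimensional Lebesgue-Radon-Nikodym decomposition applied componentwise, and then upgrading the resulting scalar data into the geometric form $g\,d\LEBT + \varrho\,d\nu$ with $\varrho$ valued in $\partial\BB_{\RR^n}$. First I would write the usual decomposition of $\mu$ with respect to $\LEBT$ as $\mu = \mu_{\mathrm{ac}} + \mu_{\mathrm{s}}$, where $\mu_{\mathrm{ac}} = g\,d\LEBT$ for some $g\in L^1(\TT,\LEBT)^n$ (the Radon-Nikodym derivative, a tuple of real $L^1$ functions, unique up to $\LEBT$-null sets) and $\mu_{\mathrm{s}}$ is a tuple of real measures singular to $\LEBT$. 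This identifies the absolutely continuous part immediately and forces $g\,d\LEBT$ to be the absolutely continuous part in any decomposition of the asserted form, since absolute continuity and singularity are detected coordinatewise.

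The substantive point is the polar-type factorization of the singular part $\mu_{\mathrm{s}}$ as $\varrho\,d\nu$ with $\nu$ a single positive scalar measure and $\varrho$ a unit-vector-valued direction field. To produce $\nu$ I would take a common dominating positive measure for the singular part, the natural choice being the total-variation-type measure $\nu := \sum_{j=1}^n |(\mu_{\mathrm{s}})_j|$, or equivalently the total variation of the vector measure $\mu_{\mathrm{s}}$; this $\nu$ is finite, positive, and singular to $\LEBT$ because each coordinate is. Since each coordinate $(\mu_{\mathrm{s}})_j$ is absolutely continuous with respect to $\nu$, the vector Radon-Nikodym theorem yields a Borel-measurable $\RR^n$-valued density $\tilde\varrho$ with $\mu_{\mathrm{s}} = \tilde\varrho\,d\nu$. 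It remains to arrange $\|\tilde\varrho\| = 1$ $\nu$-almost everywhere: the set $Z := \{\tilde\varrho = 0\}$ has $\nu(Z)=0$ by the minimality built into the choice of $\nu$ (on $Z$ all coordinate densities vanish, so $\nu$ restricted to $Z$ is zero), and off $Z$ I would normalize by setting $\varrho := \tilde\varrho/\|\tilde\varrho\|$ while absorbing the scalar factor $\|\tilde\varrho\|$ into $\nu$, i.e.\ replacing $\nu$ by $\|\tilde\varrho\|\,d\nu$. This gives $\varrho:\TT\longrightarrow\partial\BB_{\RR^n}$ and $\mu_{\mathrm{s}} = \varrho\,d\nu$ with the new $\nu$ still finite, positive, and singular to $\LEBT$.

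For uniqueness I would argue as follows: the splitting into absolutely continuous and singular parts is unique by the classical theorem, which pins down $g$ up to $\LEBT$-null sets and pins down the measure $\varrho\,d\nu$ exactly. Given two representations $\varrho_1\,d\nu_1 = \varrho_2\,d\nu_2$ with unit-vector directions, taking the total variation of both sides forces $\nu_1 = \nu_2 =: \nu$ (since $\|\varrho_i\| = 1$ makes $\nu_i$ the total variation of the common singular measure), and then $\varrho_1 = \varrho_2$ holds $\nu$-almost everywhere because two densities of the same vector measure against the same dominating measure agree a.e.

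The main obstacle I anticipate is purely bookkeeping rather than conceptual: ensuring measurability of $\varrho$ after normalization and confirming that $\nu(Z) = 0$ so that the normalization is well-defined $\nu$-almost everywhere. Measurability of $\tilde\varrho/\|\tilde\varrho\|$ on the complement of $Z$ is automatic since $\|\tilde\varrho\|$ is a measurable nonvanishing scalar there, and the value of $\varrho$ on the $\nu$-null set $Z$ is irrelevant (this is exactly the source of the ``up to a set of $\nu$ measure zero'' clause). Since the statement is cited as Lemma 2.1 of \cite{zajac20161865}, I expect the original proof to follow precisely this route, with the only delicate step being the clean identification of the dominating measure $\nu$ as the total variation so that the unit-norm condition and the uniqueness of $\nu$ come out simultaneously.
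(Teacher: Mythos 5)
The paper does not prove this lemma; it is imported verbatim from \cite{zajac20161865} (Lemma 2.1 there), so there is no in-paper argument to compare against. Your proposal is a correct, self-contained proof by the standard route one would expect: componentwise Lebesgue--Radon--Nikodym decomposition, followed by a polar decomposition of the singular vector part, with $\nu$ ultimately being the (Euclidean) total variation of $\mu_{\mathrm{s}}$ after you absorb $\|\tilde\varrho\|$ into the dominating measure. The only point worth tightening is that your initial candidate $\sum_j|(\mu_{\mathrm{s}})_j|$ is not literally ``equivalently the total variation'' of the vector measure (they are mutually absolutely continuous but distinct measures); your normalization step repairs this, and the uniqueness argument via the total variation then goes through as you describe.
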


An $n$-tuple $\mu=(\mu_1,\ldots,\mu_n)$ of real Borel measures on $\TT$ is called the \emph{boundary measure} of a map $\varphi\in\OO(\DD,\CC^n)$, if there holds the Schwarz formula
\begin{equation}\label{eq_g_schwarz_formula}
\varphi(\lambda)=\frac{1}{2\pi}\int_{\TT}\frac{\zeta+\lambda}{\zeta-\lambda}d\mu(\zeta)+i\IM\varphi(0),\quad\lambda\in\DD,
\end{equation}
or, equivalently, the Poisson formula
\begin{equation}\label{eq_g_poisson_formula}
\RE\varphi(\lambda)=\frac{1}{2\pi}\int_{\TT}\frac{1-|\lambda|^2}{|\zeta-\lambda|^2}d\mu(\zeta),\quad\lambda\in\DD.
\end{equation}
Here the integration is meant coordinate-wise. Define
$$
\MMM^n:=\lbrace\varphi\in\OO(\DD,\CC^n):\varphi\text{ admits the boundary measure}\rbrace.
$$
The family $\MMM^1$ contains, among others, every holomorphic function with non-negative or non-positive real part (see e.g.~\cite[p.~5]{koosis1998}). The correspondence between elements of $\MMM^n$ and their boundary measures is one-to-one, up to adding an imaginary constant. If a mapping $\varphi\in\MMM^n$ has the boundary measure $\mu$ and $\mu=g\,d\LEBT+\varrho\,d\nu$ is the decomposition introduced in Lemma~\ref{lem_rozklad_miary_varrho_d_nu}, then from the Fatou theorem (see \cite[p.~11]{koosis1998}) it follows that $\RE\varphi^*(\lambda)=g(\lambda)$ for $\LEBT$-a.e.~$\lambda\in\TT$. In consequence, the components of $\RE\varphi^*$ belong to $L^1(\TT,\LEBT)$ and $\mu=\RE\varphi^*\,d\LEBT+\varrho\,d\nu$. In the paper we will need the holomorphic maps induced by the parts of the decomposition of $\mu$, so let us set, for $\lambda\in\DD$,
\begin{align*}
\varphi^a(\lambda)&:=\frac{1}{2\pi}\int_{\TT}\frac{\zeta+\lambda}{\zeta-\lambda}\RE\varphi^*(\zeta)\,d\LEBT(\zeta)+i\IM\varphi(0),\\
\varphi^s(\lambda)&:=\frac{1}{2\pi}\int_{\TT}\frac{\zeta+\lambda}{\zeta-\lambda}\varrho(\zeta)\,d\nu(\zeta).
\end{align*}
Clearly $\varphi^a,\varphi^s\in\MMM^n$, $\IM\varphi^s(0)=0$ and $\varphi=\varphi^a+\varphi^s$. Moreover, employing the Fatou theorem once again we deduce that $\RE\varphi^*(\lambda)=\RE(\varphi^a)^*(\lambda)$ and $\RE(\varphi^s)^*(\lambda)=0$ for $\LEBT$-a.e.~$\lambda\in\TT$. Finally, it is worthy to note that if $\varphi\in H^1(\DD,\CC^n)$, then $\varphi\equiv\varphi^a$.

\begin{obs}\label{obs_f_of_class_H1_if_f_and_if_has_b_measures_ogolniejsza}If $a,b\in\CC$ are linearly independent over $\RR$, $f\in\OO(\DD,\CC)$ and both $af$ and $bf$ admit the boundary measures, then $f\in H^1(\DD,\CC)$.
\end{obs}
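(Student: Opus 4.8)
The plan is to recover that \emph{both} $\RE f$ and $\IM f$ are Poisson integrals of finite real Borel measures on $\TT$, and then to read off membership in $H^1$ from the classical bound on integral means. Write $a=a_1+ia_2$, $b=b_1+ib_2$ with $a_j,b_j\in\RR$, and $f=u+iv$ with $u=\RE f$, $v=\IM f$. A direct computation gives
\begin{align*}
\RE(af)&=a_1u-a_2v,\\
\RE(bf)&=b_1u-b_2v.
\end{align*}
Since $af$ and $bf$ admit the boundary measures, say $\mu_a$ and $\mu_b$, the Poisson formula~(\ref{eq_g_poisson_formula}) tells us that $\RE(af)$ and $\RE(bf)$ are, respectively, the Poisson integrals of the finite real measures $\mu_a$ and $\mu_b$.

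Next I would solve the displayed linear system for $u$ and $v$. The coefficient matrix $\left(\begin{smallmatrix} a_1 & -a_2 \\ b_1 & -b_2\end{smallmatrix}\right)$ has determinant $a_2b_1-a_1b_2$, which is nonzero precisely because $(a_1,a_2)$ and $(b_1,b_2)$ are linearly independent vectors of $\RR^2$, i.e.\ because $a$ and $b$ are linearly independent over $\RR$; this is the only place where the hypothesis enters. Inverting the system expresses $u$ and $v$ as real-linear combinations of $\RE(af)$ and $\RE(bf)$. As the Poisson integral depends linearly on the measure, it follows that $u=\RE f$ and $v=\IM f$ are themselves the Poisson integrals of finite real Borel measures, say $\sigma$ and $\tau$.

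Finally I would pass from this to the Hardy class. If a harmonic function $u$ is the Poisson integral of a finite measure $\sigma$, then for every $0<r<1$ one has $\tfrac{1}{2\pi}\int_{\TT}|u(r\zeta)|\,d\LEBT(\zeta)\le|\sigma|(\TT)$: writing $u$ as the convolution of the Poisson kernel $P_r$ with $\sigma$, the estimate $|u|\le P_r*|\sigma|$ combined with Fubini and $\tfrac{1}{2\pi}\int_{\TT}P_r(\,\cdot\,,\eta)\,d\LEBT=1$ yields exactly this. Applying the bound to $u$ and $v$ and using $|f|\le|u|+|v|$ gives
$$
\sup_{0<r<1}\frac{1}{2\pi}\int_{\TT}|f(r\zeta)|\,d\LEBT(\zeta)\le|\sigma|(\TT)+|\tau|(\TT)<\infty,
$$
which is the defining condition for $f\in H^1(\DD,\CC)$.

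No step here is genuinely hard; the argument rests on two elementary ingredients. The only structural point worth flagging is that a single hypothesis (say $af\in\MMM^1$) controls just one real-linear combination of $\RE f$ and $\IM f$, so one needs two such combinations to pin down both parts, and the real-linear independence of $a$ and $b$ is exactly what makes the resulting real $2\times2$ system invertible. The remaining input, that the $L^1$ means of a Poisson integral are controlled by the total variation of its measure, is a standard fact underlying the characterization of $H^1$ by bounded integral means.
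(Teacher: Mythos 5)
Your proof is correct, but it follows a genuinely different route from the paper's. The paper compares the Taylor expansions of the two Schwarz-formula representations of $af$ and $bf$, concludes that the complex measure $b\mu-a\nu$ has vanishing negative Fourier coefficients, invokes the theorem of the brothers Riesz to write $b\mu-a\nu=g\,d\LEBT$ with $g\in L^1(\TT,\LEBT)$, and then exhibits $f=c^{-1}\bigl(\bar b\,\RE(af)-\bar a\,\RE(bf)\bigr)$, $c=i\IM(a\bar b)$, as the Poisson integral of the $L^1$ function $\bar g/c$, citing Koosis for the passage to $H^1$. You bypass the F.~and M.~Riesz theorem entirely: you solve the real $2\times2$ system for $\RE f$ and $\IM f$, obtain each as the Poisson integral of a finite real measure, and then use only the elementary bound of the $L^1$ means of a Poisson integral by the total variation of its measure, which is exactly the defining condition for $H^1$. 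The nondegeneracy hypothesis enters identically in both arguments, since $\IM(a\bar b)=a_2b_1-a_1b_2$ is precisely your determinant. Your version is shorter and more elementary; what the paper's version buys in exchange is the stronger intermediate conclusion that the combination $b\mu-a\nu$ is absolutely continuous and that $f$ is the Poisson integral of an $L^1$ density, which is in the spirit of the boundary-measure decompositions used throughout Section~3 (though, once $f\in H^1$ is known, absolute continuity of its boundary measure follows anyway).
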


\begin{proof} We may assume that $f(0)=0$. Let $\mu$ and $\nu$ be the boundary measures of $af$ and $bf$, respectively. One has that
$$
\frac{b}{2\pi}\int_{\TT}\frac{\zeta+\lambda}{\zeta-\lambda}d\mu(\zeta)=\frac{a}{2\pi}\int_{\TT}\frac{\zeta+\lambda}{\zeta-\lambda}d\nu(\zeta),\quad\lambda\in\DD.
$$
Writing the Taylor series' expansions at the origin of both sides, we get
$$
\sum_{n=1}^\infty \frac{b\lambda^n}{\pi}\int_{\TT}\bar\zeta^n\,d\mu(\zeta)=\sum_{n=1}^\infty \frac{a\lambda^n}{\pi}\int_{\TT}\bar\zeta^n\,d\nu(\zeta),\quad\lambda\in\DD.
$$
This yields that
$$
\int_{\TT}\bar\zeta^n\,d(b\mu-a\nu)(\zeta)=0,\quad n=1,2,\ldots.
$$
The theorem of the brothers Riesz yields that $b\mu-a\nu=g\,d\LEBT$ for a function $g\in L^1(\TT,\LEBT)$. Since $c:=i\IM(a\bar b)\neq 0$, from \eqref{eq_g_poisson_formula} applied to $af$ and $bf$ there follows the equality
$$
f(\lambda)=c^{-1}\left(\bar b\,\RE(af)-\bar a\,\RE(bf)\right)=\frac{1}{2\pi c}\int_{\TT}\frac{1-|\lambda|^2}{|\zeta-\lambda|^2}\,\bar g(\zeta)\,d\LEBT(\zeta),\quad\lambda\in\DD,
$$
which in turn leads to the desired conclusion, in view of \cite[p.~7]{koosis1998}.
\end{proof}

\section{Complex geodesics in convex domains}\label{sect:cgicd}

Given mappings $\varphi,h\in\OO(\DD,\CC^n)$ and a point $z\in\CC^n$ define the function $\psi_z\in\OO(\DD,\CC)$ by
\begin{equation}\label{eq_g_formula_re_psi_z_1}
\psi_z(\lambda):=\tfrac{\varphi(0)-\varphi(\lambda)}{\lambda}\bullet h(\lambda)+\tfrac{h(\lambda)-h(0)}{\lambda}\bullet(z-\varphi(0))+\lambda\,\overline{h(0)\bullet(z-\varphi(0))}.
\end{equation}
It can be also written as
\begin{equation}\label{eq_g_formula_re_psi_z_2}
\psi_z(\lambda)=\frac{h(\lambda)\bullet(z-\varphi(\lambda))-h(0)\bullet(z-\varphi(0))}{\lambda} + \lambda\,\overline{h(0)\bullet(z-\varphi(0))}.
\end{equation}
If for a $\lambda\in\TT$ the radial limits of $h$ and $\varphi$ exist at $\lambda$, then for each $z\in\CC^n$ the radial limit $\psi_z^*(\lambda)$ also exists and it holds that
\begin{equation}\label{eq_g_granice_radialne_re_psi_z}
\RE\psi_z^*(\lambda)=\RE\left[\BLHSL{}\bullet(z-\varphi^*(\lambda))\right],\quad z\in\CC^n.
\end{equation}
In most situations it will be clear for which maps $\varphi$ and $h$ the function $\psi_z$ is regarded. Otherwise, we shall write it with additional upper indexes, namely $\psi_z^{\varphi}$ or $\psi_z^{\varphi,h}$.

Let us recall a lemma which provides a sufficient condition for a holomorphic map to be a complex geodesic:

\begin{lem}[\cite{zajac20151337}, Lemma 3.4]\label{lem_g_ogolne_warunki_wystarczajace}Let $D\subset\CC^n$ be a domain and let $\varphi\in\OO(\DD,D)$. If there exists a map $h\in\OO(\DD,\CC^n)$ such that $\RE\psi_{\varphi(0)}(0)\neq 0$ and
$$
\RE\psi_z(\lambda)\leq 0,\quad\lambda\in\DD,\ z\in D,
$$
then $\varphi$ admits a left inverse in $D$.
\end{lem}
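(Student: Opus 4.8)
The plan is to exhibit the left inverse explicitly as the map assigning to $z\in D$ the unique zero in $\DD$ of the holomorphic function $F(\cdot,z)$, where $F(\lambda,z):=h(\lambda)\bullet(z-\varphi(\lambda))$. The motivation is that $F(\lambda,\varphi(\lambda))\equiv 0$, so once I know that for every $z\in D$ the function $F(\cdot,z)$ has exactly one zero in $\DD$, the assignment $f(z):=\text{that zero}$ automatically satisfies $f(\varphi(\mu))=\mu$ for all $\mu\in\DD$, i.e.\ $f\circ\varphi=\id_\DD$. Thus the whole statement reduces to two points: (A) for each $z\in D$ the function $F(\cdot,z)$ has exactly one zero in $\DD$, and it is simple; and (B) this zero depends holomorphically on $z$ and lies in $\DD$.

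First I would record a strengthening of the hypothesis: $\RE\psi_z<0$ on all of $\DD$, for every $z\in D$. Indeed, $\psi_z$ is real-affine in $z$ by \eqref{eq_g_formula_re_psi_z_1}, and $D$ is open; evaluating the inequality $\RE\psi_z\le 0$ at $z\pm w$ for small $w$ shows that, at each fixed $\lambda$, either the linear-in-$z$ part has vanishing real part or $\RE\psi_z(\lambda)<0$. Since $h\not\equiv 0$ (otherwise $\psi_{\varphi(0)}\equiv 0$, contradicting $\RE\psi_{\varphi(0)}(0)\ne 0$), the degenerate alternative fails for some $\lambda$, and the strong maximum principle for the nonpositive harmonic function $\RE\psi_z$ forces $\RE\psi_z<0$ everywhere. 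In particular $\psi_z(\DD)\subset\{w:\RE w<0\}$, so $\psi_z$ is zero-free, and $\RE\psi_{\varphi(0)}(0)<0$.

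Next I would use the identity coming from \eqref{eq_g_formula_re_psi_z_2}, namely $F(\lambda,z)=\lambda\psi_z(\lambda)+c-\lambda^2\overline{c}$ with $c:=F(0,z)=h(0)\bullet(z-\varphi(0))$. When $c=0$ (e.g.\ $z=\varphi(0)$) this reads $F(\lambda,z)=\lambda\psi_z(\lambda)$, whose only zero in $\DD$ is the simple zero $\lambda=0$, because $\psi_z$ is zero-free; so $f(z)=0$ there and $\partial_\lambda F(0,z)=\psi_z(0)\ne 0$. The remaining case $c\ne 0$ is where the real work lies. Writing $F(\lambda,z)=\lambda\bigl(\psi_z(\lambda)+c/\lambda-\lambda\overline{c}\bigr)$ on $\{|\lambda|=r\}$ and letting $r\to 1^-$, the correction $c/\lambda-\lambda\overline{c}$ becomes purely imaginary on $\TT$ (this is exactly the cancellation behind \eqref{eq_g_granice_radialne_re_psi_z}), so the real part of the bracketed factor tends to $\RE\psi_z^*\le 0$; heuristically the bracket has winding $0$ about the origin and the factor $\lambda$ contributes winding $1$, giving a single zero. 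The hard part will be turning this into a rigorous count: $\RE\psi_z^*$ may vanish on part of $\TT$, so I must rule out zeros of $F(\cdot,z)$ escaping to the boundary as $z$ varies. I expect to handle this by a connectedness argument on the domain $D$: the number of zeros in $\DD$ is an integer, locally constant in $z$ as long as $F(\cdot,z)$ has no zero on $\TT$; it equals $1$ on the locus $\{z:c=0\}$ by the previous paragraph; and a boundary zero $\lambda_0\in\TT$ would force $\RE[\bar\lambda_0 F(\lambda_0,z)]=\RE\psi_z^*(\lambda_0)=0$, which cannot persist for an interior $z$ in view of the strict interior inequality. Propagating the count $1$ along paths in the connected $D$ then yields (A).

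Finally, granting (A), part (B) is routine. The unique zero $f(z)$ is simple, so $\partial_\lambda F(f(z),z)\ne 0$, and since $F$ is holomorphic in $(\lambda,z)$ the holomorphic implicit function theorem makes $f\in\OO(D,\DD)$, with values in $\DD$ by construction. As noted, $F(\mu,\varphi(\mu))=0$ and uniqueness give $f\circ\varphi=\id_\DD$, so $f$ is the desired left inverse. The single genuine obstacle in this scheme is the global zero-count (A), i.e.\ controlling the zeros of $F(\cdot,z)$ near $\TT$ uniformly in $z$; the strict negativity of $\RE\psi_z$ in the interior, established in the second paragraph, is the tool I would rely on for this.
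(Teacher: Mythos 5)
The overall strategy --- define the left inverse at $z$ as the unique zero in $\DD$ of $F(\cdot,z)=h(\cdot)\bullet(z-\varphi(\cdot))$, using $F(\mu,\varphi(\mu))=0$ --- is indeed the construction behind the cited result, and your first paragraph (upgrading $\RE\psi_z\le 0$ to $\RE\psi_z<0$ via affineness in $z$ and the strong maximum principle) is correct. The gap is in step (A), which is the entire content of the lemma. First, the claim ``$\RE\psi_z<0$ on $\DD$ implies $F(\cdot,z)$ has exactly one zero in $\DD$'' is \emph{false} as a statement about a single pair: take $\psi(\lambda)=\lambda-1$ (so $\RE\psi<0$ on $\DD$) and $c=s\in[\tfrac12,1)$; then $u(\lambda)=\lambda\psi(\lambda)+c-\bar c\lambda^2=(1-s)\lambda^2-\lambda+s$ has roots $1$ and $s/(1-s)\ge 1$, hence \emph{no} zero in $\DD$. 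So no argument that uses only the strict interior inequality for the one $z$ under consideration can close (A); the hypothesis for a full neighbourhood of $z$ inside $D$ must enter essentially. Second, your mechanism for excluding boundary escape does not work: $F(\cdot,z)$ is defined only on $\DD$ and $h,\varphi$ need not have radial limits anywhere ($D$ is an arbitrary domain here, and the boundary-measure machinery of Lemma~\ref{lem_D_fi_miary_graniczne} is not available); and even where $\psi_z^*$ exists, $\RE\psi_z^*(\lambda_0)=0$ at a boundary point is perfectly compatible with $\RE\psi_z<0$ on $\DD$ (again $\psi(\lambda)=\lambda-1$ at $\lambda_0=1$), so ``cannot persist in view of the strict interior inequality'' is a non sequitur. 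Consequently the zero count is not locally constant in $z$ by your argument, and the propagation from the slice $\{c=0\}$ to all of $D$ is unjustified.

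For comparison, here is the kind of global input the step actually needs. Writing $c-\bar c\lambda^2=c(1-\bar a^2\lambda^2)$ with $a^2=c/\bar c$, $|a|=1$, the zeros of $F(\cdot,z)$ in $\DD$ are exactly the solutions of $K(\bar a\lambda)\,\psi_z(\lambda)=-|c|$, where $K(\mu)=\mu/(1-\mu^2)$ maps the half-disc $\{\mu\in\DD:\RE\mu>0\}$ conformally onto the right half-plane; after this change of variable the equation becomes a coincidence $G(w)=-|c|/w$ of a holomorphic map $G$ from a half-plane into a half-plane with a M\"obius automorphism, i.e.\ a fixed-point equation for a holomorphic self-map of $\DD$, and the Schwarz--Pick lemma yields \emph{at most one} (simple) solution. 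That disposes of half of (A) cleanly. The \emph{existence} of a zero is the remaining genuine difficulty: it must exploit the inequality $\RE\psi_w\le 0$ for all $w$ in a ball around $z$ (equivalently $\RE\psi_z(\lambda)\le-\varepsilon\|H(\lambda)\|$ with $H(\lambda)=\lambda^{-1}\bigl(h(\lambda)-(1-|\lambda|^2)h(0)\bigr)$), which is precisely what rules out configurations such as the example above. Your proposal contains neither of these ingredients, so as written it does not prove the lemma.
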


The next lemma ensures that for convex domains the condition from Lemma~\ref{lem_g_ogolne_warunki_wystarczajace} is in fact an equivalent one.

\begin{lem}\label{lem_g_ogolne_warunki_konieczne}Let $D\subset\CC^n$ be a convex domain and let $\varphi\in\OO(\DD,D)$ be a complex geodesic for $D$ with a left inverse $f\in\OO(D,\DD)$. Set
$$
h(\lambda):=\left(\PDER{f}{z_1}(\varphi(\lambda)),\ldots,\PDER{f}{z_n}(\varphi(\lambda))\right),\quad\lambda\in\DD.
$$
Then $\RE\psi_{\varphi(0)}(0)\neq 0$ and
$$
\RE\psi_z(\lambda)\leq 0,\quad\lambda\in\DD,\ z\in D.
$$
\end{lem}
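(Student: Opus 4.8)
The plan is to treat the two assertions separately, deriving both from the single identity obtained by differentiating the left-inverse relation, the inequality requiring in addition one application of the Schwarz lemma to a well-chosen auxiliary family.

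First I would differentiate $f\circ\varphi=\id_{\DD}$ in $\lambda$. By the chain rule this gives $h(\lambda)\bullet\varphi'(\lambda)=1$ for every $\lambda\in\DD$; in particular $h(0)\bullet\varphi'(0)=1$. Taking $z=\varphi(0)$ in \eqref{eq_g_formula_re_psi_z_1} makes the last two summands vanish, so $\psi_{\varphi(0)}(\lambda)=\tfrac{\varphi(0)-\varphi(\lambda)}{\lambda}\bullet h(\lambda)$, and letting $\lambda\to0$ yields $\psi_{\varphi(0)}(0)=-h(0)\bullet\varphi'(0)=-1$. Hence $\RE\psi_{\varphi(0)}(0)=-1\neq0$, which settles the first claim.

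For the inequality, fix $z\in D$ and $\lambda\in\DD$. Here convexity of $D$ enters: since $\varphi(\lambda),z\in D$, the whole segment $\varphi(\lambda)+t(z-\varphi(\lambda))$, $t\in[0,1]$, lies in $D$, so
$$
G_t(\lambda):=f\bigl(\varphi(\lambda)+t(z-\varphi(\lambda))\bigr)
$$
defines, for each $t\in[0,1]$, a holomorphic map $G_t\colon\DD\longrightarrow\DD$ with $G_0=\id_{\DD}$ and $\PDER{G_t}{t}\big|_{t=0}=h(\lambda)\bullet(z-\varphi(\lambda))=:b_z(\lambda)$. Composing with the Möbius automorphism of $\DD$ sending $G_t(0)$ to $0$, I would set
$$
\omega_t(\lambda):=\frac{G_t(\lambda)-G_t(0)}{1-\overline{G_t(0)}\,G_t(\lambda)},
$$
so that $\omega_t\colon\DD\longrightarrow\DD$ is holomorphic with $\omega_t(0)=0$; the Schwarz lemma then gives $\lvert\omega_t(\lambda)/\lambda\rvert\le1$ on $\DD$.

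The crux is a first-order expansion in $t$. Inserting $G_t(\lambda)=\lambda+t\,b_z(\lambda)+o(t)$ and $G_t(0)=t\,b_z(0)+o(t)$ into $\omega_t$, a direct computation gives
$$
\frac{\omega_t(\lambda)}{\lambda}=1+t\Bigl(\tfrac{b_z(\lambda)-b_z(0)}{\lambda}+\lambda\,\overline{b_z(0)}\Bigr)+o(t)=1+t\,\psi_z(\lambda)+o(t),
$$
the bracket coinciding, with $b_z(\lambda)=h(\lambda)\bullet(z-\varphi(\lambda))$, exactly with the defining formula \eqref{eq_g_formula_re_psi_z_2} for $\psi_z(\lambda)$. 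From $\lvert\omega_t(\lambda)/\lambda\rvert\le1$ one gets $\lvert1+t\psi_z(\lambda)\rvert^2\le1+o(t)$, i.e.\ $2t\,\RE\psi_z(\lambda)+t^2\lvert\psi_z(\lambda)\rvert^2\le o(t)$; dividing by $t>0$ and letting $t\to0^+$ yields $\RE\psi_z(\lambda)\le0$, as required. The step I expect to be the most delicate is recognizing the right normalization $\omega_t(\lambda)/\lambda$ and verifying that its first-order coefficient in $t$ is precisely $\psi_z(\lambda)$; once the auxiliary family $\omega_t$ is in place and its expansion is justified, the remainder is the Schwarz lemma together with routine bookkeeping, valid simultaneously for all $\lambda\in\DD$ and all $z\in D$.
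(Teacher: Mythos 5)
Your proof is correct, and for the inequality $\RE\psi_z\le 0$ it takes a genuinely different route from the paper's. The first assertion is handled identically in both arguments: differentiating $f\circ\varphi=\id_\DD$ gives $h(\lambda)\bullet\varphi'(\lambda)=1$, whence $\psi_{\varphi(0)}(0)=-h(0)\bullet\varphi'(0)=-1$ (you even get the exact value, not just nonvanishing). For the inequality, the paper also starts from the family $f_{z,t}(\lambda)=f((1-t)\varphi(\lambda)+tz)$, but then invokes a Schwarz--Pick type estimate ($|f_{z,t}(\lambda)|-|\lambda|\le\frac{2|f_{z,t}(0)|}{1+|f_{z,t}(0)|}(1-|\lambda|)$) whose right-hand side degenerates only as $|\lambda|\to 1$; this yields a bound on $\RE\psi_z$ with an error term $\frac{1-|\lambda|}{|\lambda|^2}$, so the paper must pass to $\LEBT$-a.e.\ radial limits (which exist because $\RE\psi_z$ is bounded above) and then conclude by the maximum principle. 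You instead renormalize $G_t$ by the M\"obius automorphism killing $G_t(0)$, apply the classical Schwarz lemma to $\omega_t(\lambda)/\lambda$, and identify the first-order coefficient in $t$ of $\omega_t(\lambda)/\lambda$ with $\psi_z(\lambda)$ via \eqref{eq_g_formula_re_psi_z_2}; this gives $\RE\psi_z(\lambda)\le 0$ pointwise at every $\lambda\in\DD$ directly, with no boundary-value or maximum-principle step (the case $\lambda=0$ being covered by $|\omega_t'(0)|\le 1$ or by continuity). Your expansion checks out: with $b_z(\lambda)=h(\lambda)\bullet(z-\varphi(\lambda))$ one indeed has $\omega_t(\lambda)/\lambda=1+t\bigl(\tfrac{b_z(\lambda)-b_z(0)}{\lambda}+\lambda\overline{b_z(0)}\bigr)+o(t)$, and the bracket is exactly $\psi_z(\lambda)$, so your computation also explains where the otherwise ad hoc formula for $\psi_z$ comes from. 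What the paper's route buys is that it reuses a ready-made estimate from the literature and produces along the way the boundary inequality $\RE\psi_z^*\le 0$ that is exploited later; what yours buys is a more elementary, self-contained interior argument. The only point to state explicitly is that the $o(t)$ terms are legitimate for each fixed $\lambda$ and $z$ because $t\mapsto G_t(\lambda)$ extends holomorphically to a complex neighbourhood of $[0,1]$, which is immediate from the convexity of $D$ and the openness of $D$.
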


\begin{proof}
Differentiating both sides of the equality $f(\varphi(\lambda))=\lambda$ we get
\begin{equation}\label{eq_g_tubowe_lowk_1145}
h(\lambda)\bullet\varphi'(\lambda)=1,\quad\lambda\in\DD.
\end{equation}
In particular,
$$
\RE\psi_{\varphi(0)}(0) = -\RE\left[h(0)\bullet\varphi'(0)\right]\neq 0.
$$

For $z\in D$ and $t\in[0,1]$ set
$$
f_{z,t}(\lambda):=f((1-t)\varphi(\lambda)+tz),\quad\lambda\in\DD.
$$
Clearly $f_{z,t}\in\OO(\DD,\DD)$ and $f_{z,0}(\lambda)=\lambda$. One can check that
\begin{equation}\label{eq_g_tubowe_lowk_123488}
\left.\DDER{|f_{z,t}(\lambda)|^2}{t}\right|_{t=0}=2\,\RE\left[\BLHL{}\bullet(z-\varphi(\lambda))\right],\quad\lambda\in\DD,\ z\in D.
\end{equation}
On the other hand, in view of \cite[Lemma 1.2.4]{abate1989} we have
$$
|f_{z,t}(\lambda)|-|\lambda|\leq\frac{2|f_{z,t}(0)|}{1+|f_{z,t}(0)|}(1-|\lambda|),\quad\lambda\in\DD,\ z\in D.
$$
Thus
$$
\frac{|f_{z,t}(\lambda)|^2-|f_{z,0}(\lambda)|^2}{t}=\frac{|f_{z,t}(\lambda)|^2-|\lambda|^2}{t} \leq 2\,\frac{|f_{z,t}(\lambda)|-|\lambda|}{t} \leq \frac{4|\frac1t f_{z,t}(0)|}{1+|f_{z,t}(0)|}(1-|\lambda|).
$$
Taking limit for $t$ tending to $0$ we get
\begin{equation}\label{eq_g_tubowe_lowk_123499}
\left.\DDER{|f_{z,t}(\lambda)|^2}{t}\right|_{t=0} \leq 4(1-|\lambda|)\left|\left.\DDER{f_{z,t}(0)}{t}\right|_{t=0}\right| \leq 4(1-|\lambda|)\,|h(0)\bullet(z-\varphi(0))|.
\end{equation}
Now from \eqref{eq_g_tubowe_lowk_123488} and \eqref{eq_g_tubowe_lowk_123499} we conclude that
\begin{equation*}
\RE\left[\BLHL{}\bullet(z-\varphi(\lambda))\right] \leq 2(1-|\lambda|)\,|h(0)\bullet(z-\varphi(0))|,\quad\lambda\in\DD,\ z\in D.
\end{equation*}
Dividing this inequality by $|\lambda|^2$ we obtain
\begin{equation}\label{eq_g_tubowe_lowk_1234}
\RE\left[\frac{h(\lambda)\bullet(z-\varphi(\lambda))}{\lambda}\right] \leq 2\,\frac{1-|\lambda|}{|\lambda|^2}\,|h(0)\bullet(z-\varphi(0))|,\quad\lambda\in\DD_*,\ z\in D.
\end{equation}

Fix $z\in D$. By \eqref{eq_g_formula_re_psi_z_2} and the above inequality, the function $\RE\psi_z$ is bounded from above on the set $\DD\setminus\frac12\DD$. The maximum principle yields that it is bounded from above on $\DD$. In particular, $\LEBT$-almost all of its radial limits exist and, in view of \eqref{eq_g_tubowe_lowk_1234}, for $\LEBT$-a.e.~$\lambda\in\TT$ one has that
$$
\RE\psi_z^*(\lambda)\leq 0.
$$
Thus, from the maximum principle it follows that $\RE\psi_z(\lambda)\leq 0$ for $\lambda\in\DD$.
\end{proof}

For a domain $D\in \mathcal A_d^n$ define
\begin{align*}
W_D&:=\lbrace v\in\CC^n:\sup_{z\in D}\RE(z\bullet v)<\infty\rbrace,\\
S_D&:=\lbrace y\in\RR^d:\forall\,v\in W_D:(0,y)\bullet v\leq 0\rbrace.
\end{align*}
The sets $W_D$ and $S_D$ are convex infinite cones and one can check that
\begin{equation}\label{eq_zt0y_in_D_if_z_in_D_0y_in_SD_t_geq_0}
z+(0,y)+(0,ix)\in D,\quad z\in D,\ y\in S_D,\ x\in\RR^d.
\end{equation}
Moreover, $W_D\subset\CC^{n-d}\times\RR^d$ and the interior of $W_D$ with respect to $\CC^{n-d}\times\RR^d$ is non-empty. The latter statement is justified as follows: assuming, to the contrary, that the aforementioned interior is empty, one can find a non-zero vector $v_0\in\CC^{n-d}\times\RR^d$ such that $\RE(v_0\bullet v)=0$ for all $v\in W_D$. Fix $z_0\in D$. By the choice of $d$ one has that $z_0+v_0\cdot\RR\not\subset D$, so $p_0:=z_0+t_0 v_0\in\partial D$ for some $t_0\in\RR\setminus\lbrace 0\rbrace$. Now, if $v\in\CC^n$ is chosen so that $\RE((z-p_0)\bullet v)<0$ for all $z\in D$, then $v\in W_D$ and $\RE(v_0\bullet v)\neq 0$. A contradiction.

\begin{lem}\label{lem_D_fi_miary_graniczne}Let $D\in\mathcal{A}_d^n$ and $\varphi=(\varphi_1,\ldots,\varphi_n)\in\OO(\DD,\CC^n)$. Then $\varphi(\DD)\subset\overline{D}$ if and only if the following three conditions hold:
\begin{enumerate}[(i)]
\item\label{lem_D_fi_miary_graniczne_fi_j_in_H1} $\varphi_1,\ldots,\varphi_{n-d}\in H^1(\DD,\CC)$, $\varphi_{n-d+1},\ldots,\varphi_n\in\MMM^1$,
\item\label{lem_D_fi_miary_graniczne_granice_radialne_w_domknieciu} $\varphi^*(\lambda)\in\overline{D}$ for $\LEBT$-a.e.~$\lambda\in\TT$,
\item\label{lem_D_fi_miary_graniczne_fi_j_has_boundary_measure} the boundary measure of $\varphi$ is of the form
    $$
    \RE\varphi^*\,d\LEBT+(0,\varrho)\,d\nu,
    $$
    where $\nu$ is a finite positive Borel measure on $\TT$, singular to $\LEBT$, and $\varrho:\TT\longrightarrow\partial\BB_{\RR^d}$ is a Borel-measurable map such that $\varrho(\lambda)\in S_D$ for $\nu$-a.e.~$\lambda\in\TT$.
\end{enumerate}
\end{lem}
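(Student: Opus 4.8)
The plan is to prove both implications by reducing everything to the scalar functions $\varphi\bullet v$ with $v\in W_D$, exploiting the half-space description of $\overline D$. Writing $h_D(v):=\sup_{z\in D}\RE(z\bullet v)$, which is finite exactly on $W_D$, the convexity of $D$ together with the Hahn--Banach theorem gives $\overline D=\bigcap_{v\in W_D}\{z\in\CC^n:\RE(z\bullet v)\le h_D(v)\}$; here one uses that every real-linear functional on $\CC^n$ is of the form $z\mapsto\RE(z\bullet v)$ for a unique $v\in\CC^n$. The computation underlying both directions is that, once $\varphi\in\MMM^n$ with boundary measure $\mu=\RE\varphi^*\,d\LEBT+\varrho\,d\nu$ as in Lemma~\ref{lem_rozklad_miary_varrho_d_nu}, for every $v\in W_D$ the function $\varphi\bullet v$ lies in $\MMM^1$ with boundary measure $\RE(\varphi^*\bullet v)\,d\LEBT+(\varrho\bullet v)\,d\nu$. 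This is exactly where the structure of $\mathcal A_d^n$ enters: since $W_D\subset\CC^{n-d}\times\RR^d$, the last $d$ entries of $v$ are real, so for $j\le n-d$ the component $\varphi_j\in H^1$ makes $v_j\varphi_j\in H^1$ have the purely absolutely continuous boundary measure $\RE(v_j\varphi_j^*)\,d\LEBT$ even though $v_j$ is complex, whereas for $j>n-d$ the coefficient $v_j$ is real and $v_j\varphi_j\in\MMM^1$ contributes both parts; summing the contributions yields the stated measure.

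For the implication (i)--(iii)$\Rightarrow\varphi(\DD)\subset\overline D$ I would fix $v\in W_D$ and integrate the above boundary measure against the nonnegative Poisson kernel, which has integral one. By (ii) the absolutely continuous density satisfies $\RE(\varphi^*\bullet v)\le h_D(v)$ $\LEBT$-a.e., while by (iii) the singular density $(0,\varrho)\bullet v\le 0$ $\nu$-a.e.\ because $\varrho(\lambda)\in S_D$ and $v\in W_D$. Hence $\RE(\varphi(\lambda)\bullet v)\le h_D(v)$ for every $\lambda\in\DD$, and intersecting over $v\in W_D$ places $\varphi(\DD)$ inside $\overline D$.

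For the converse I would first observe that $\varphi(\DD)\subset\overline D$ forces $\RE(\varphi\bullet v)\le h_D(v)<\infty$ for each $v\in W_D$, so $h_D(v)-\varphi\bullet v$ has nonnegative real part and therefore $\varphi\bullet v\in\MMM^1$. Picking $v_0$ in the interior of $W_D$ relative to $\CC^{n-d}\times\RR^d$ (nonempty by the discussion preceding the lemma), the differences $\tfrac1t\big(\varphi\bullet(v_0+te_j)-\varphi\bullet v_0\big)=\varphi_j$ show $\varphi_j\in\MMM^1$ for every $j$; for $j\le n-d$ one also has $ie_j\in\CC^{n-d}\times\RR^d$, so the analogous difference gives $i\varphi_j\in\MMM^1$, and Observation~\ref{obs_f_of_class_H1_if_f_and_if_has_b_measures_ogolniejsza} (with $a=1$, $b=i$) upgrades this to $\varphi_j\in H^1$, whereas for $j>n-d$ one cannot perturb by $ie_j$ without leaving $\CC^{n-d}\times\RR^d$, which is precisely why only $\varphi_j\in\MMM^1$ survives there. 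This gives (i) and $\varphi\in\MMM^n$. Condition (ii) is then immediate, since $\varphi^*$ exists $\LEBT$-a.e.\ and $\overline D$ is closed. For (iii) I apply Lemma~\ref{lem_rozklad_miary_varrho_d_nu}; as $\varphi_j\in H^1$ has absolutely continuous boundary measure for $j\le n-d$, the first $n-d$ components of $\varrho$ vanish $\nu$-a.e., so $\varrho=(0,\varrho'')$ with $\varrho''$ valued in $\partial\BB_{\RR^d}$ $\nu$-a.e.

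It remains to check $\varrho''(\lambda)\in S_D$ for $\nu$-a.e.\ $\lambda$, which I expect to be the main obstacle. Fixing $v\in W_D$, the nonnegative harmonic function $h_D(v)-\RE(\varphi\bullet v)$ has, by uniqueness of the Herglotz representing measure, the positive representing measure $h_D(v)\,\LEBT-\sigma_v$, where $\sigma_v$ is the boundary measure of $\varphi\bullet v$; comparing singular parts forces the singular part $\big((0,\varrho'')\bullet v\big)\,d\nu$ of $\sigma_v$ to be $\le 0$, that is $(0,\varrho''(\lambda))\bullet v\le 0$ for $\nu$-a.e.\ $\lambda$. The delicate point is that this exceptional $\nu$-null set depends on $v$; to obtain a single null set valid for all $v\in W_D$ simultaneously I would run the argument for $v$ ranging over a fixed countable dense subset of $W_D$, take the union of the countably many null sets, and then pass to arbitrary $v\in W_D$ using continuity of $v\mapsto(0,\varrho''(\lambda))\bullet v$. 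This gives $\varrho''(\lambda)\in S_D$ $\nu$-a.e.\ and completes (iii).
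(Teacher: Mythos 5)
Your proof is correct and follows essentially the same route as the paper's: reduce to the scalar functions $\varphi\bullet v$ for $v\in W_D$, use Observation~\ref{obs_f_of_class_H1_if_f_and_if_has_b_measures_ogolniejsza} (via perturbations inside the relative interior of $W_D$) to get $\varphi_1,\ldots,\varphi_{n-d}\in H^1$, decompose the boundary measure by Lemma~\ref{lem_rozklad_miary_varrho_d_nu}, and handle the $v$-dependence of the exceptional $\nu$-null set by a countable dense subset of $W_D$. The only genuine divergence is local: to get $(0,\varrho)\bullet v\le 0$ $\nu$-a.e.\ the paper passes to weak-$*$ limits of the measures $\RE\varphi_j(r\cdot)\,d\LEBT$ and multiplies the resulting measure inequality by the indicator of a carrier of $\nu$, whereas you compare singular parts using positivity of the Herglotz measure of the nonnegative harmonic function $h_D(v)-\RE(\varphi\bullet v)$; likewise your ``if'' direction runs through the support-function description of $\overline{D}$ rather than the paper's explicit appeal to convexity plus the cone property \eqref{eq_zt0y_in_D_if_z_in_D_0y_in_SD_t_geq_0}. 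Both variants are valid and of comparable length, so nothing is gained or lost beyond a slightly cleaner uniform treatment of the absolutely continuous and singular parts in your version.
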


From the above lemma it follows that if $D\in\mathcal{A}_d^n$, $\varphi\in\OO(\DD,\CC^n)$ and $\varphi(\DD)\subset\overline{D}$, then $\varphi_{1,\ldots,n-d}^a\equiv\varphi_{1,\ldots,n-d}$, $\varphi_{1,\ldots,n-d}^s\equiv 0$ and $\varphi^a(\DD)\subset\overline{D}$. Moreover, the following useful equalities are implied by \eqref{lem_D_fi_miary_graniczne_fi_j_in_H1}:
\begin{align}
\label{eq_poisson_formula_fi_a_1_nd} \varphi_{1,\ldots,n-d}^a(\lambda)&=\frac{1}{2\pi}\int_{\TT}\frac{1-|\lambda|^2}{|\zeta-\lambda|^2}\varphi_{1,\ldots,n-d}^*(\zeta)\,d\LEBT(\zeta),\\
\label{eq_poisson_formula_re_fi_a_nd1_n} \RE\varphi_{n-d+1,\ldots,n}^a(\lambda)&=\frac{1}{2\pi}\int_{\TT}\frac{1-|\lambda|^2}{|\zeta-\lambda|^2}\RE\varphi_{n-d+1,\ldots,n}^*(\zeta)\,d\LEBT(\zeta),
\end{align}
when $\lambda\in\DD$.

\begin{proof}Assume that $\varphi(\DD)\subset\overline{D}$ and take an Euclidean ball $B\subset W_D\subset\CC^{n-d}\times\RR^d$. For each $v\in B$ the real part of the function $\varphi(\cdot)\bullet v$ is bounded from above, so this function belongs to $\MMM^1$. Hence, also
\begin{equation}\label{eq_ldfmg_fi_bullet_vw_in_M}
\varphi(\cdot)\bullet (v-w)\in\MMM^1,\quad v,w\in B.
\end{equation}
Therefore, $\varphi$ admits the boundary measure, denoted below by $\mu=(\mu_1,\ldots,\mu_n)$. In particular, $\LEBT$-almost all radial limits of $\varphi$ exist, what implies \eqref{lem_D_fi_miary_graniczne_granice_radialne_w_domknieciu}. Employing \eqref{eq_ldfmg_fi_bullet_vw_in_M} once again and using Observation~\ref{obs_f_of_class_H1_if_f_and_if_has_b_measures_ogolniejsza} we obtain that the functions $\varphi_1,\ldots,\varphi_{n-d}$ are of the class $H^1$. This gives the condition \eqref{lem_D_fi_miary_graniczne_fi_j_in_H1} and the equality $\mu_{1,\ldots,n-d}=\RE\varphi_{1,\ldots,n-d}^*\,d\LEBT$. What is more, if $r\to 1^-$, then for $j=1,\ldots,n-d$ the measures $\RE\varphi_j(r\lambda)\,d\LEBT(\lambda)$ and $\IM\varphi_j(r\lambda)\,d\LEBT(\lambda)$ converge weakly-* to $\RE\varphi_j^*(\lambda)\,d\LEBT(\lambda)$ and $\IM\varphi_j^*(\lambda)\,d\LEBT(\lambda)$, respectively.

Write $\mu=\RE\varphi^*\,d\LEBT+\widetilde{\varrho}\,d\nu$, as in Lemma~\ref{lem_rozklad_miary_varrho_d_nu}. Since the measures $\mu_1,\ldots,\mu_{n-d}$ are absolutely continuous with respect to $\LEBT$, one has that $\widetilde{\varrho}(\lambda)\in\lbrace 0\rbrace^{n-d}\times\RR^d$ for $\nu$-a.e.~$\lambda\in\TT$. Thus, $\widetilde{\varrho}=(0,\varrho)$ for a Borel-measurable mapping $\varrho:\TT\longrightarrow\partial\BB_{\RR^d}$.

We claim that $\varrho(\lambda)\in S_D$ for $\nu$-a.e.~$\lambda\in\TT$. Take an arbitrary vector $v=(v_1,\ldots,v_n)\in W_D$ and a real constant $C$ so that $\RE(z\bullet v)\leq C$ for all $z\in D$. Since $v_{n-d+1},\ldots,v_n\in\RR$, for all $\lambda\in\TT$ and $r\in(0,1)$ we have
$$
\sum_{j=1}^{n-d}\left(\RE v_j\,\RE\varphi_j(r\lambda)-\IM v_j\,\IM\varphi_j(r\lambda)\right)+\sum_{j=n-d+1}^n v_j\RE\varphi_j(r\lambda)\leq C.
$$
Multiplying both sides by $\LEBT$ and taking the weak-* limits for $r\to 1^-$ we obtain
\begin{equation}\label{eq_ldfmg_sum_j_re_vj_re_fijs_im_vj_im_fijs}
\sum_{j=1}^{n-d}\left(\RE v_j\,\RE\varphi_j^*\,d\LEBT-\IM v_j\,\IM\varphi_j^*\,d\LEBT\right)+\sum_{j=n-d+1}^n v_j\,d\mu_j\leq C\,d\LEBT.
\end{equation}
There exists a Borel subset $S\subset\TT$ such that
$$
\LEBT(S)=0,\;\nu(\TT\setminus S)=0.
$$
Clearly
\begin{equation*}
\chi_S\,d\LEBT=0,\; \chi_S\,d\mu=(0,\varrho)\,d\nu.
\end{equation*}
Hence, multiplying both sides of \eqref{eq_ldfmg_sum_j_re_vj_re_fijs_im_vj_im_fijs} by $\chi_S$ we get that $(v\bullet(0,\varrho))\,d\nu\leq 0$, what leads to the conclusion that the inequality $v\bullet(0,\varrho)\leq 0$ is valid $\nu$-almost everywhere on $\TT$. This 'almost everywhere' may a priori depend on $v$, but one can omit this problem in the following way. Take a dense subset $\lbrace v_j:j=1,2,\ldots\rbrace\subset W_D$ and for each $j$ choose a Borel set $A_j\subset\TT$ so that $\nu(\TT\setminus A_j)=0$ and $v_j\bullet(0,\varrho(\lambda))\leq 0$ for every $\lambda\in A_j$. Denote $A:=\cap_{j=1}^\infty A_j$. It is now clear that $\nu(\TT\setminus A)=0$ and $v\bullet(0,\varrho(\lambda))\leq 0$ all $v\in W_D$ and $\lambda\in A$. Thus, $\varrho(\lambda)\in S_D\text{ for }\nu\text{-a.e. }\lambda\in\TT$. The statement \eqref{lem_D_fi_miary_graniczne_fi_j_has_boundary_measure} follows from this.

Now assume that $\varphi$ satisfy the conditions \eqref{lem_D_fi_miary_graniczne_fi_j_in_H1}, \eqref{lem_D_fi_miary_graniczne_granice_radialne_w_domknieciu} and \eqref{lem_D_fi_miary_graniczne_fi_j_has_boundary_measure}. Fix $\lambda\in\DD$. From the equalities \eqref{eq_poisson_formula_fi_a_1_nd} and \eqref{eq_poisson_formula_re_fi_a_nd1_n}, the fact that $(1-|\lambda|^2)|\zeta-\lambda|^{-2}\,d\LEBT(\zeta)$ is a probabilistic measure and the convexity of $D$ it follows that $\varphi^a(\lambda)\in\overline{D}$. Moreover, since $S_D$ is a closed convex infinite cone, we have
$$
\RE\varphi_{n-d+1,\ldots,n}^s(\lambda) = \frac{1}{2\pi}\int_{\TT}\frac{1-|\lambda|^2}{|\zeta-\lambda|^2}\varrho(\zeta)\,d\nu(\zeta)\in S_D.
$$
Thus $\varphi(\lambda)=\varphi^a(\lambda)+\varphi^s(\lambda)\in\overline{D}$, by \eqref{eq_zt0y_in_D_if_z_in_D_0y_in_SD_t_geq_0}.
\end{proof}

\begin{obs}\label{obs_h_of_class_H1_if_fi_geodesic}If $\varphi, h\in\OO(\DD,\CC^n)$ are such that $\RE\psi_z(\lambda)\leq 0$ for all $\lambda\in\DD$ and all $z$ from an Euclidean ball $B\subset\CC^n$, then $h\in H^1(\DD,\CC^n)$.
\end{obs}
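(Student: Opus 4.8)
The plan is to exploit the fact that, for fixed $\varphi$ and $h$, the function $\psi_z$ depends on $z$ in a real-affine manner, so that differences $\psi_z-\psi_{z_0}$ kill the $z$-independent (and a priori badly behaved) term $\tfrac{\varphi(0)-\varphi(\lambda)}{\lambda}\bullet h(\lambda)$ and isolate $h$. Write $z_0$ for the centre and $r>0$ for the radius of $B$. Straight from \eqref{eq_g_formula_re_psi_z_1} one checks that for every $u\in\CC^n$
$$
\psi_{z_0+u}(\lambda)-\psi_{z_0}(\lambda)=\Phi_u(\lambda):=\tfrac{h(\lambda)-h(0)}{\lambda}\bullet u+\lambda\,\overline{h(0)\bullet u},\quad\lambda\in\DD,
$$
and that $\Phi_{-u}=-\Phi_u$. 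Since $z_0\in B$ gives $\RE\psi_{z_0}\le 0$, the function $\psi_{z_0}$ belongs to $\MMM^1$; I will also use that $\MMM^1$ is a real vector space and contains every holomorphic function with non-positive real part.

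Next I would feed in the coordinate directions. Fix $k\in\{1,\dots,n\}$ and $t\in(0,r)$, so that $z_0\pm t e_k,\,z_0\pm i t e_k\in B$. Taking $u=\pm t e_k$ and using $\overline{h_k(0)}\in\CC$, one gets $\Phi_{t e_k}=t\,G_k$ for the holomorphic function
$$
G_k(\lambda):=\tfrac{h_k(\lambda)-h_k(0)}{\lambda}+\lambda\,\overline{h_k(0)},
$$
whence $\RE\psi_{z_0}\pm t\,\RE G_k\le 0$, i.e. $|\RE G_k|\le-\tfrac1t\RE\psi_{z_0}$ on $\DD$. Taking instead $u=\pm i t e_k$ yields $\Phi_{i t e_k}=i t\,\widetilde G_k$ for the holomorphic function
$$
\widetilde G_k(\lambda):=\tfrac{h_k(\lambda)-h_k(0)}{\lambda}-\lambda\,\overline{h_k(0)},
$$
and, since $\RE(i t\,\widetilde G_k)=-t\,\IM\widetilde G_k$, the estimate $|\IM\widetilde G_k|\le-\tfrac1t\RE\psi_{z_0}$ on $\DD$.

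From the first estimate, $\RE\bigl(G_k+\tfrac1t\psi_{z_0}\bigr)\le 0$, so $G_k+\tfrac1t\psi_{z_0}\in\MMM^1$ and hence $G_k\in\MMM^1$. From the second, $\RE\bigl(i\widetilde G_k+\tfrac1t\psi_{z_0}\bigr)\le 0$ (as $\RE(i\widetilde G_k)=-\IM\widetilde G_k$), so $i\widetilde G_k\in\MMM^1$. Now set $F_k(\lambda):=\tfrac{h_k(\lambda)-h_k(0)}{\lambda}=\tfrac12(G_k+\widetilde G_k)$. Because $\lambda\,\overline{h_k(0)}$ and $i\lambda\,\overline{h_k(0)}$ are bounded, hence lie in $\MMM^1$, the identities $F_k=G_k-\lambda\,\overline{h_k(0)}$ and $iF_k=i\widetilde G_k+i\lambda\,\overline{h_k(0)}$ give $F_k\in\MMM^1$ and $iF_k\in\MMM^1$. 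As $1$ and $i$ are linearly independent over $\RR$, Observation~\ref{obs_f_of_class_H1_if_f_and_if_has_b_measures_ogolniejsza} (with $a=1$, $b=i$) yields $F_k\in H^1(\DD,\CC)$. Consequently $h_k(\lambda)=h_k(0)+\lambda F_k(\lambda)\in H^1$, and since $k$ is arbitrary, $h\in H^1(\DD,\CC^n)$.

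The crux of the argument is passing from the pointwise inequalities $\RE\psi_z\le 0$ to genuine boundary information about $h$, which is not assumed a priori to possess radial limits. The device that makes this possible is the splitting of the real-affine $z$-dependence of $\psi_z$ into the two \emph{holomorphic} combinations $G_k$ and $\widetilde G_k$, each dominated by the single fixed function $-\RE\psi_{z_0}\ge 0$ whose boundary values are integrable; the final upgrade from membership in $\MMM^1$ to membership in $H^1$ is then exactly what Observation~\ref{obs_f_of_class_H1_if_f_and_if_has_b_measures_ogolniejsza} provides. I expect the only delicate points to be the verification of the affine identity for $\psi_{z_0+u}-\psi_{z_0}$ and the careful bookkeeping of signs in the real-part estimates.
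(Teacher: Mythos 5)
Your proposal is correct and follows essentially the same route as the paper: the paper likewise forms the differences $\psi_z-\psi_w$ for $z,w\in B$ to isolate $\widetilde h(\lambda)\bullet(z-w)$ with $\widetilde h(\lambda)=\lambda^{-1}(h(\lambda)-h(0))$, specializes to the directions $te_k$ and $ite_k$, and invokes Observation~\ref{obs_f_of_class_H1_if_f_and_if_has_b_measures_ogolniejsza} to conclude each $\widetilde h_k\in H^1$. Your explicit domination of $\RE G_k$ and $\IM\widetilde G_k$ by $-\tfrac1t\RE\psi_{z_0}$ is a slightly more laborious way of establishing membership in $\MMM^1$ than simply noting that $\MMM^1$ is a real vector space containing all $\psi_z$, $z\in B$, and all bounded functions, but the argument is the same.
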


\begin{proof}Define $\widetilde{h}(\lambda):=\lambda^{-1}(h(\lambda)-h(0))$. From the assumptions it follows that for each $z\in B$ the function $\psi_z$ admits the boundary measure.
Hence, so does the function
$$
\lambda\mapsto\widetilde{h}(\lambda)\bullet (z-w)=\psi_z(\lambda)-\psi_w(\lambda)+\lambda\,\overline{h(0)\bullet (z-w)}
$$
for each $z,w\in B$. In view of Observation~\ref{obs_f_of_class_H1_if_f_and_if_has_b_measures_ogolniejsza}, each coordinate of $\widetilde{h}$ is of the class $H^1$. This implies that $h$ lies in $H^1(\DD,\CC^n)$.
\end{proof}

For the sake of clarity, introduce the following family of mappings:
\begin{align*}
\HH^n   &:=\lbrace h\in\OO(\CC,\CC^n): \forall\lambda\in\TT:\,\BLHL{}\in\RR^n\rbrace,\\
\HH^n_+ &:=\lbrace h\in\OO(\CC,\CC^n): \forall\lambda\in\TT:\,\BLHL{}\in [0,\infty)^n\rbrace,\\
\HH^n_d &:=\lbrace h=(h_1,\ldots,h_n)\in H^1(\DD,\CC^n): h_{n-d+1},\ldots,h_n\in\HH^1\rbrace.
\end{align*}
It is elementary that if $h\in H^1(\DD,\CC^n)$ satisfies $\BLHSL{}\in\RR^n$ for $\LEBT$-a.e.~$\lambda\in\TT$, then $h\in\HH^n$ and $h(\lambda)=\bar a\lambda^2+b\lambda+a$ for some $a\in\CC^n$ and $b\in\RR^n$. Moreover, \cite[Lemma 8.4.6]{jarnicki1993} states that each $h\in\HH^1_+$ is of the form $h(\lambda)=c(\lambda-d)(1-\bar d\lambda)$ for some $c\in[0,\infty)$ and $d\in\CDD$. In that case $\BLHL{}=c|\lambda-d|^2$ for $\lambda\in\TT$.

\begin{obs}\label{obs_boundary_measure_of_one_dimensional_psi_z}If $\varphi\in\MMM^k$ has the boundary measure $\mu$ and $h\in\HH^k$, then for each $z\in\CC^k$ the function $\psi_z$ belongs to $\MMM^1$ and its boundary measure is equal to
$$
\BLHL{}\bullet(\RE z\,d\LEBT(\lambda)-d\mu(\lambda)).
$$
\end{obs}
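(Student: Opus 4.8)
The plan is to exhibit the claimed measure and verify the Poisson formula \eqref{eq_g_poisson_formula} for $\psi_z$ directly. Write $\sigma:=\bar\lambda h(\lambda)\bullet(\RE z\,d\LEBT(\lambda)-d\mu(\lambda))$ for the candidate. Since each component of $h$ lies in $\HH^1$, the function $\lambda\mapsto\bar\lambda h_j(\lambda)$ is real-valued and bounded on $\TT$, so $\sigma$ is a well-defined finite real Borel measure; establishing $\psi_z\in\MMM^1$ with boundary measure $\sigma$ then amounts to checking that $\RE\psi_z$ equals the Poisson integral of $\sigma$. Because the Poisson formula \eqref{eq_g_poisson_formula} is linear in the measure, $\MMM^1$ is closed under addition and the boundary measure of a sum is the sum of the boundary measures; and since both $\psi_z$ (see \eqref{eq_g_formula_re_psi_z_1}) and $\sigma$ decompose as sums over the $k$ coordinates, I would first reduce to the scalar case $k=1$, treating each index $j$ with $\varphi_j\in\MMM^1$, $h_j\in\HH^1$ and $z_j\in\CC$ separately.

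In the scalar case I would use the explicit shape $h(\lambda)=\bar a\lambda^2+b\lambda+a$ (with $a\in\CC$, $b\in\RR$) recalled just before the statement. Then the last two summands of \eqref{eq_g_formula_re_psi_z_1} collapse to the affine polynomials $\frac{h(\lambda)-h(0)}{\lambda}=\bar a\lambda+b$ and $\lambda\,\overline{h(0)(z-\varphi(0))}=\bar a\,\overline{(z-\varphi(0))}\,\lambda$, whose real parts are transparent. For the first summand I would insert the Schwarz representation \eqref{eq_g_schwarz_formula} of $\varphi$, which gives $\frac{\varphi(0)-\varphi(\lambda)}{\lambda}=-\frac1\pi\int_\TT(\zeta-\lambda)^{-1}\,d\mu(\zeta)$, so that this term becomes the Cauchy-type transform $-\frac{h(\lambda)}{\pi}\int_\TT(\zeta-\lambda)^{-1}\,d\mu(\zeta)$; as $\mu$ is real, $\RE$ may be moved under the integral sign.

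I would then match $\RE\psi_z$ with the Poisson integral of $\sigma$ by separating the $z$-dependent and $\mu$-dependent contributions. The $z$-part is immediate: on $\TT$ one has $\bar\zeta h(\zeta)=\bar a\zeta+b+a\bar\zeta$, whose harmonic extension is $\bar a\lambda+b+a\bar\lambda$, and a one-line computation shows this equals the $z$-contribution of the two affine terms, the only inputs being $\bar a\lambda+a\bar\lambda=2\RE(\bar a\lambda)$ and $b\in\RR$. For the $\mu$-part I would first observe that the two occurrences of $\IM\varphi(0)$ cancel, after which everything is expressed through $\RE\varphi(0)=\frac1{2\pi}\mu(\TT)$ and through integrals against $d\mu$. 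The whole $\mu$-part then reduces, integrand by integrand, to the single pointwise identity
$$
2\RE\left[\frac{h(\lambda)}{\zeta-\lambda}\right]+\bar a\lambda+a\bar\lambda+b=\frac{1-|\lambda|^2}{|\zeta-\lambda|^2}\big(\bar a\zeta+b+a\bar\zeta\big),\qquad\zeta\in\TT,\ \lambda\in\DD,
$$
which I would verify by clearing the denominator $|\zeta-\lambda|^2=(\zeta-\lambda)(\bar\zeta-\bar\lambda)$ and simplifying with $\zeta\bar\zeta=1$.

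Granting the displayed identity, integrating it against $d\mu$ yields the $\mu$-part equality, which together with the $z$-part gives exactly $\RE\psi_z(\lambda)=\frac1{2\pi}\int_\TT\frac{1-|\lambda|^2}{|\zeta-\lambda|^2}\,d\sigma(\zeta)$, i.e.\ \eqref{eq_g_poisson_formula}, so that $\psi_z\in\MMM^1$ with boundary measure $\sigma$, as required. The argument is essentially a bookkeeping exercise; the one point demanding genuine care is the pointwise identity above, which encodes precisely how the Cauchy-transform term recombines with the polynomial terms into a Poisson kernel (the case $a=0$ being nothing but $\RE\frac{\zeta+\lambda}{\zeta-\lambda}=\frac{1-|\lambda|^2}{|\zeta-\lambda|^2}$), and so I expect the verification of that identity, together with the real/imaginary bookkeeping leading to it, to be the main obstacle.
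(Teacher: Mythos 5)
Your proposal is correct: the reduction to the scalar case, the use of the explicit form $h(\lambda)=\bar a\lambda^2+b\lambda+a$ of an element of $\HH^1$, and the pointwise kernel identity all check out (I verified that clearing the denominator $(\zeta-\lambda)(\bar\zeta-\bar\lambda)$ indeed leaves $(1-|\lambda|^2)(\bar a\zeta+b+a\bar\zeta)$, and that the $\IM\varphi(0)$ contributions are purely imaginary so they drop out of $\RE\psi_z$). The paper itself gives no argument here, only the instruction to repeat the proof of Lemma~3.7 of \cite{zajac20151337}, which proceeds by essentially the same direct Poisson-kernel computation, so your write-up is an adequate self-contained substitute.
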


\begin{proof}Repeat the argument employed in the proof of \cite[Lemma 3.7]{zajac20151337}.
\end{proof}

\begin{lem}\label{lem_D_fi_h_postaci_i_miary_graniczne}Let $D\in\mathcal{A}_d^n$, $\varphi, h\in\OO(\DD,\CC^n)$ be such that $\varphi(\DD)\subset\overline{D}$ and
$$
\RE\psi_z(\lambda)\leq 0,\quad\lambda\in\DD,\ z\in D.
$$
Let
$$
\RE\varphi^*\,d\LEBT+(0,\varrho)\,d\nu
$$
be the boundary measure of $\varphi$ written in the same form as in Lemma~\ref{lem_D_fi_miary_graniczne}. Then:
\begin{enumerate}[(i)]
\item\label{lem_D_fi_h_postaci_i_miary_graniczne_h_in_Hnd} $h\in\HH^n_d$,
\item\label{lem_D_fi_h_postaci_i_miary_graniczne_miara_graniczna_psi_z} for every $z\in\CC^n$ the boundary measure of $\psi_z$ is absolutely continuous with respect to $\LEBT$ and equal to
    $$
    \RE\left[\BLHSL{}\bullet(z-\varphi^*(\lambda))\right]\,d\LEBT(\lambda),
    $$
\item\label{lem_D_fi_h_postaci_i_miary_graniczne_blhl_bullet_ro_zerowe} $\BLHL{n-d+1,\ldots,n}\bullet\varrho(\lambda)=0$ for $\nu$-a.e.~$\lambda\in\TT$.
\end{enumerate}
\end{lem}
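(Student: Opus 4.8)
The plan is to establish (i) first, and then to reduce (ii) and (iii) to a single statement about the singular part of the boundary measure of $\psi_z$.

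For (i), since $D$ is open it contains a Euclidean ball, so Observation~\ref{obs_h_of_class_H1_if_fi_geodesic} applies directly and gives $h\in H^1(\DD,\CC^n)$. To place the last $d$ coordinates in $\HH^1$ I would exploit that, by \eqref{eq_zt0y_in_D_if_z_in_D_0y_in_SD_t_geq_0} with $y=0$, the domain $D$ is invariant under the translations $z\mapsto z+(0,itx)$ for $x\in\RR^d$, $t\in\RR$. Fixing $z_0\in D$ and $x\in\RR^d$ and writing $w:=(0,ix)$, the holomorphic function
$$
G(\lambda):=\tfrac{(h(\lambda)-h(0))\bullet w}{\lambda}+\lambda\,\overline{h(0)\bullet w}
$$
satisfies $\RE\psi_{z_0+tw}=\RE\psi_{z_0}+t\,\RE G\le 0$ for every $t\in\RR$; as $\RE\psi_{z_0}(\lambda)$ is finite, letting $t\to\pm\infty$ forces $\RE G\equiv 0$ on $\DD$. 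Passing to radial limits and using $|\lambda|=1$ one computes $\RE G^*(\lambda)=\RE[\BLHSL{}\bullet w]=-\sum_k x_k\IM[\bar\lambda h_{n-d+k}^*(\lambda)]$, which vanishes a.e.\ for every $x$; hence $\bar\lambda h_j^*(\lambda)\in\RR$ a.e.\ for $j>n-d$. By the remark preceding Observation~\ref{obs_boundary_measure_of_one_dimensional_psi_z} this yields $h_{n-d+1},\ldots,h_n\in\HH^1$, i.e.\ $h\in\HH^n_d$.

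For (ii) and (iii), I would first record that for $z\in D$ the inequality $\RE\psi_z\le 0$ gives $\psi_z\in\MMM^1$ with $\mu_{\psi_z}\le 0$; since $\psi_z-\psi_{z_0}\in H^1(\DD,\CC)$ for any fixed $z_0\in D$ (a backward shift of an $H^1$ function plus a monomial), the singular part $\sigma$ of $\mu_{\psi_z}$ is nonpositive and independent of $z$, while its absolutely continuous density is $\RE\psi_z^*=\RE[\BLHSL{}\bullet(z-\varphi^*(\lambda))]$ by \eqref{eq_g_granice_radialne_re_psi_z}. I would then split $\psi_z=\alpha_z+\beta_z$ into the contributions of the first $n-d$ and the last $d$ coordinates. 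Because $h_{n-d+1},\ldots,h_n\in\HH^1$ by (i), Observation~\ref{obs_boundary_measure_of_one_dimensional_psi_z} applies to $\beta_z$ and shows that its boundary measure has absolutely continuous part $\RE[\BLHSL{n-d+1,\ldots,n}\bullet(z_{n-d+1,\ldots,n}-\varphi^*_{n-d+1,\ldots,n})]\,d\LEBT$ and singular part $-\BLHL{n-d+1,\ldots,n}\bullet\varrho\,d\nu$. Moreover, applying the $t\to+\infty$ argument from part (i) to a direction $w=(0,y)$ with $y\in S_D$ produces an $H^1$ function whose boundary real part equals $\BLHSL{n-d+1,\ldots,n}\bullet y\le 0$ a.e.; as $\BLHL{n-d+1,\ldots,n}$ is real and continuous on $\TT$, this upgrades to $\BLHL{n-d+1,\ldots,n}\bullet y\le 0$ for every $\lambda\in\TT$ and every $y\in S_D$, whence $\BLHL{n-d+1,\ldots,n}\bullet\varrho\le 0$ for $\nu$-a.e.\ $\lambda$.

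The statement therefore reduces to showing that $\alpha_z$, the contribution of the first $n-d$ coordinates, has absolutely continuous boundary measure; \emph{this is the main obstacle}. Here $\varphi_{1,\ldots,n-d},h_{1,\ldots,n-d}\in H^1$ (with $\varphi$ carrying no singular part), yet $\alpha_z$ is, up to an explicit $H^1$ summand, the backward shift of the product $h_{1,\ldots,n-d}\bullet\varphi_{1,\ldots,n-d}$, which a priori lies only in $H^{1/2}$ and could develop singular boundary mass. I would rule this out by proving $h_{1,\ldots,n-d}\bullet\varphi_{1,\ldots,n-d}\in H^1$: the product lies in the Smirnov class $N^+$, and (assuming after a translation $0\in D$ with $\BB(0,\rho)\subset D$) the sign condition yields the pointwise domination $\rho\|h^*\|\le\RE[\BLHSL{}\bullet\varphi^*]=-\RE\psi_0^*\in L^1(\TT)$. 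The delicate point is that this controls only the real part of $\bar\lambda(h\bullet\varphi)^*$, so promoting it to genuine $L^1$-integrability of the boundary values of the product — and hence, via $N^+\cap L^1(\TT)=H^1$, to membership of $\alpha_z$ in $H^1$ — is exactly where the function-theoretic work lies; this step, extracting full Hardy-space control of a product of two $H^1$ maps from the extremality inequality, is the technical heart of the proof (an approximation by the dilates $\varphi(\rho\,\cdot)$, $h(\rho\,\cdot)$, with a weak-$*$ control of the limiting singular mass, is the natural alternative route).

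Finally, once $\sigma_{\alpha_z}=0$ is secured, the singular part of $\mu_{\psi_z}$ equals $-\BLHL{n-d+1,\ldots,n}\bullet\varrho\,d\nu$, which is nonnegative by the inequality obtained above; combined with $\sigma\le 0$ this forces $\sigma=0$. This simultaneously yields (ii), that $\mu_{\psi_z}$ is absolutely continuous with density $\RE[\BLHSL{}\bullet(z-\varphi^*(\lambda))]$, and (iii), that $\BLHL{n-d+1,\ldots,n}\bullet\varrho=0$ for $\nu$-a.e.\ $\lambda\in\TT$.
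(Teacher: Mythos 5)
Your treatment of part (i) is sound and essentially the paper's argument (the paper works directly with the radial limits via \eqref{eq_g_granice_radialne_re_psi_z} and the points $z+ite_j$, but your $t\to\pm\infty$ argument amounts to the same thing), and your derivation of $\BLHL{n-d+1,\ldots,n}\bullet\varrho\leq 0$ together with the final sign comparison that would yield (ii) and (iii) simultaneously is also correct. The problem is the step you yourself flag as ``the technical heart'': the absolute continuity of the boundary measure of $\alpha_z$, i.e.\ of the contribution of the product $h_{1,\ldots,n-d}\bullet\varphi_{1,\ldots,n-d}$. This is a genuine gap, not a routine verification. The domination $\rho\|h^*\|\leq-\RE\psi_0^*$ controls only the real part of the boundary values, and a Smirnov-class function with non-positive real part can perfectly well carry non-trivial singular boundary mass (e.g.\ $-\tfrac{1+\lambda}{1-\lambda}$, whose boundary measure is a point mass while its radial boundary real part vanishes a.e.), so nothing in what you have written rules out singular mass coming from the product. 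Since the whole content of (ii) --- and hence your deduction of (iii) --- rests on this point, the proof is incomplete as it stands.

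The paper closes exactly this gap with a different device, which you may want to compare with your parenthetical remark about dilates: in the inequality $\RE\psi_z^*(\lambda)\leq 0$ one substitutes the $\lambda$-\emph{dependent} point $z=\varphi(r\lambda)$, $r\in(0,1)$, obtaining
$$
\RE\psi_{\varphi(0)}^*(\lambda)\leq-\RE\left[\BLHSL{}\bullet(\varphi(r\lambda)-\varphi(0))\right],
$$
where the right-hand side is the boundary value of the $H^1$ function $\lambda\mapsto-\lambda^{-1}h(\lambda)\bullet(\varphi(r\lambda)-\varphi(0))$ (here the boundedness of $\varphi(r\,\cdot)$ rescues the product). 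Integrating over $\TT$ and letting $r\to 1^-$ gives $\frac{1}{2\pi}\int_{\TT}\RE\psi_{\varphi(0)}^*\,d\LEBT\leq\RE\psi_{\varphi(0)}(0)$; comparing this with $\RE\psi_{\varphi(0)}(0)=\frac{1}{2\pi}\int_{\TT}\RE\psi_{\varphi(0)}^*\,d\LEBT+\frac{1}{2\pi}\omega_s(\TT)$ forces the non-positive singular part $\omega_s$ to have zero total mass, hence to vanish. Since $\psi_z-\psi_{\varphi(0)}\in H^1(\DD,\CC)$, this settles (ii) for every $z$, and (iii) then follows along the lines of your last paragraph. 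Without this substitution, or a genuine substitute for it, your argument does not go through.
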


\begin{proof}Write $h=(h_1,\ldots,h_n)$. In view of Observation~\ref{obs_h_of_class_H1_if_fi_geodesic}, the map $h$ belongs to $H^1(\DD,\CC^n)$. Fix $z\in D$ and $\lambda\in\TT$ such that the radial limits of $h$ and $\varphi$ exist at $\lambda$. Take $j\in\lbrace n-d+1,\ldots,n\rbrace$. For every $t\in\RR$ the point $z+ite_j$ lies in $D$. Thus, from the assumptions and the equality \eqref{eq_g_granice_radialne_re_psi_z} we conclude that the function $t\mapsto\RE\left[\BLHSL{}\bullet ite_j\right]$ is bounded from above on $\RR$. This yields that $\BLHSL{j}\in\RR$, so $h_j\in\HH^1$, as required in \eqref{lem_D_fi_h_postaci_i_miary_graniczne_h_in_Hnd}.

To get \eqref{lem_D_fi_h_postaci_i_miary_graniczne_miara_graniczna_psi_z}, it suffices to prove that the boundary measure of $\psi_{\varphi(0)}$ is absolutely continuous with respect to $\LEBT$. Fix $r\in(0,1)$. By the assumptions, for all $z\in D$ and $\LEBT$-a.e.~$\lambda\in\TT$ (with the 'a.e.' being independent on $z$) we have $\RE\psi_z^*(\lambda)\leq 0$. In particular, applying this for $z=\varphi(r\lambda)$ we obtain
$$
\RE\psi_{\varphi(0)}^*(\lambda)=\RE\left[\BLHSL{}\bullet(\varphi(0)-\varphi^*(\lambda))\right]\leq -\RE\left[\BLHSL{}\bullet(\varphi(r\lambda)-\varphi(0))\right].
$$
The function $\lambda\mapsto h(\lambda)\bullet\lambda^{-1}(\varphi(r\lambda)-\varphi(0))$ is of class $H^1$, so integrating both sides of the above inequality we get
$$
\frac{1}{2\pi}\int_{\TT}\RE\psi_{\varphi(0)}^*(\lambda)\,d\LEBT(\lambda)\leq r\RE\left[-h(0)\bullet\varphi'(0)\right]=r\RE\psi_{\varphi(0)}(0).
$$
Thus
$$
\frac{1}{2\pi}\int_{\TT}\RE\psi_{\varphi(0)}^*(\lambda)\,d\LEBT(\lambda)\leq\RE\psi_{\varphi(0)}(0),
$$
because $r$ was chosen arbitrarily. Let $\omega_s$ be the singular part of the boundary measure of $\psi_{\varphi(0)}$ in its Lebesgue-Radon-Nikodym decomposition with respect to $\LEBT$. Since $\RE\psi_{\varphi(0)}\leq 0$ on $\DD$, the measure $\omega_s$ is negative. But the boundary measure of $\psi_{\varphi(0)}$ is equal to $\RE\psi_{\varphi(0)}^*\,d\LEBT+\omega_s$, so
$$
\RE\psi_{\varphi(0)}(0) = \frac{1}{2\pi}\int_{\TT}\RE\psi_{\varphi(0)}^*(\lambda)\,d\LEBT(\lambda)+\frac{1}{2\pi}\omega_s(\TT) \leq \RE\psi_{\varphi(0)}(0)+\frac{1}{2\pi}\omega_s(\TT).
$$
This inequality leads to the conclusion that $\omega_s(\TT)=0$ and completes the proof of \eqref{lem_D_fi_h_postaci_i_miary_graniczne_miara_graniczna_psi_z}.

It remains to prove \eqref{lem_D_fi_h_postaci_i_miary_graniczne_blhl_bullet_ro_zerowe}. Fix $z\in D$ and observe that
\begin{equation}\label{eq_ldfhpimg_psi_z_eq_psi_z_fi_a_plus_psi_0_fi_s}
\psi_z=\psi_z^{\varphi^a}+\psi_0^{\varphi^s}.
\end{equation}
Since $\varrho\,d\nu$ is the boundary measure of $\varphi_{n-d+1,\ldots,n}^s$, the condition \eqref{lem_D_fi_h_postaci_i_miary_graniczne_h_in_Hnd} and Observation~\ref{obs_boundary_measure_of_one_dimensional_psi_z} yield that the boundary measure of $\psi_0^{\varphi^s}$ is equal to
$$
-\BLHL{n-d+1,\ldots,n}\bullet\varrho(\lambda)\,d\nu(\lambda).
$$
We claim that it is positive. Indeed, for $\nu$-a.e.~$\lambda\in\TT$ we have $\varrho(\lambda)\in S_D$, by the choice of $\varrho$, and for $\LEBT$-a.e.~$\zeta\in\TT$ we have $\bar\zeta h^*(\zeta)\in W_D$, by \eqref{eq_g_granice_radialne_re_psi_z}. This means that
$$
\bar\zeta h_{n-d+1,\ldots,n}(\zeta)\bullet\varrho(\lambda) = \bar\zeta h^*(\zeta)\bullet (0,\varrho(\lambda))\leq 0.
$$
Now, in view of continuity of $h_{n-d+1,\ldots,n}$, fixing $\lambda$ and passing with $\zeta$ to $\lambda$, we obtain the desired inequality. A consequence of this and the equality \eqref{eq_ldfhpimg_psi_z_eq_psi_z_fi_a_plus_psi_0_fi_s} is that the boundary measure of $\psi_z^{\varphi^a}$ is negative, so $\varphi^a$ and $h$ staitsfy the assumptions of this lemma. Hence, they also satisfy the already proved condition \eqref{lem_D_fi_h_postaci_i_miary_graniczne_miara_graniczna_psi_z}. But the equalities
$$
\varphi_{1,\ldots,n-d}^*=(\varphi_{1,\ldots,n-d}^a)^*,\quad\RE\varphi_{n-d+1,\ldots,n}^*=\RE(\varphi_{n-d+1,\ldots,n}^a)^*,
$$
being valid $\LEBT$-a.e. on $\TT$, yield that the boundary measures of $\psi_z$ and $\psi_z^{\varphi_a}$ are equal. Therefore, from \eqref{eq_ldfhpimg_psi_z_eq_psi_z_fi_a_plus_psi_0_fi_s} it follows that $\RE\psi_0^{\varphi^s}\equiv 0$, what gives \eqref{lem_D_fi_h_postaci_i_miary_graniczne_blhl_bullet_ro_zerowe} and completes the proof.
\end{proof}

For a domain $D\in\mathcal{A}_d^n$ and a vector $v\in\CC^{n-d}\times\RR^d$ introduce the set
$$
P_D(v):=\lbrace p\in(\CC^{n-d}\times\RR^d)\cap\overline{D}:\RE((z-p)\bullet v)<0\text{ for all }z\in D\rbrace.
$$
It is clear that $P_D(v)$ is a closed and convex subset of $\partial D$. Moreover, if $P_D(v)\neq\varnothing$, then $v\in W_D$. In the case when the image of $D$ under the orthogonal projection on $\CC^{n-d}\times\RR^d$ is strictly convex in the geometric sense, each $P_D(v)$ has at most one element. The sets $P_D(v)$ represent certain geometric properties of $D$ which will be found useful in finding the part $\varphi^a$ of a complex geodesic.

\begin{thm}\label{th_warunek_konieczny_na_geodezyjna_w_Adn}Let $D\in\mathcal{A}_d^n$ and let $\varphi\in\OO(\DD,D)$ be a map with the boundary measure of the form
$$
\RE\varphi^*\,d\LEBT+(0,\varrho)\,d\nu,
$$
as in Lemma~\ref{lem_D_fi_miary_graniczne}. Assume that $\varphi$ is a complex geodesic for $D$ and take a map $h=(h_1,\ldots,h_n)\in\HH^n_d$ so that $\RE\psi_{\varphi(0)}(0)\neq 0$ and
$$
\RE\psi_z(\lambda)\leq 0,\quad\lambda\in\DD,\ z\in D.
$$
Then:
\begin{enumerate}[(i)]
\item\label{th_warunek_konieczny_na_geodezyjna_w_Adn_granice_radialne} for $\LEBT$-a.e.~$\lambda\in\TT$ one has that
    \begin{equation*}
    (\varphi_{1,\ldots,n-d}^*(\lambda),\RE\varphi_{n-d+1,\ldots,n}^*(\lambda))\in P_D(\BLHSL{}),
    \end{equation*}
\item\label{th_warunek_konieczny_na_geodezyjna_w_Adn_czesc_singularna} the measure $$\BLHL{n-d+1,\ldots,n}\bullet\varrho(\lambda)\,d\nu(\lambda)$$ is null.
\end{enumerate}

\noindent
Moreover, let $\nu'$ be a finite positive Borel measure on $\TT$, singular to $\LEBT$, and let $\varrho':\TT\longrightarrow\partial\BB_{\RR^d}$ be a Borel-measurable map such that $\varrho'(\lambda)\in S_D$ for $\nu'$-a.e.~$\lambda\in\TT$ and the measure
$$
\BLHL{n-d+1,\ldots,n}\bullet\varrho'(\lambda)\,d\nu'(\lambda)
$$
is null. If $\tau\in\MMM^n$ is such that $\tau^a\equiv\varphi^a$ and $\tau^s$ has the boundary measure $(0,\varrho')\,d\nu'$, then either $\tau(\DD)\subset\partial D$ or $\tau(\DD)\subset D$ and $\tau$ is a complex geodesic for $D$.
\end{thm}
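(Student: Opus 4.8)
The plan is to dispatch parts \eqref{th_warunek_konieczny_na_geodezyjna_w_Adn_granice_radialne} and \eqref{th_warunek_konieczny_na_geodezyjna_w_Adn_czesc_singularna} quickly from Lemma~\ref{lem_D_fi_h_postaci_i_miary_graniczne}, and then to build the converse statement on Lemma~\ref{lem_g_ogolne_warunki_wystarczajace}. Part~\eqref{th_warunek_konieczny_na_geodezyjna_w_Adn_czesc_singularna} is immediate, since Lemma~\ref{lem_D_fi_h_postaci_i_miary_graniczne}~\eqref{lem_D_fi_h_postaci_i_miary_graniczne_blhl_bullet_ro_zerowe} gives $\BLHL{n-d+1,\ldots,n}\bullet\varrho(\lambda)=0$ for $\nu$-a.e.~$\lambda$, so the indicated measure vanishes identically. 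For part~\eqref{th_warunek_konieczny_na_geodezyjna_w_Adn_granice_radialne} I would set $p(\lambda):=(\varphi_{1,\ldots,n-d}^*(\lambda),\RE\varphi_{n-d+1,\ldots,n}^*(\lambda))$ and note first that $p(\lambda)$ differs from $\varphi^*(\lambda)$ only by an imaginary shift in the last $d$ coordinates; as $\overline D$ is invariant under such shifts (the closure of \eqref{eq_zt0y_in_D_if_z_in_D_0y_in_SD_t_geq_0} with $y=0$) and $\varphi^*(\lambda)\in\overline D$ a.e.\ by Lemma~\ref{lem_D_fi_miary_graniczne}~\eqref{lem_D_fi_miary_graniczne_granice_radialne_w_domknieciu}, this places $p(\lambda)$ in $\overline D\cap(\CC^{n-d}\times\RR^d)$. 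Since $h_{n-d+1,\ldots,n}\in\HH^1$ forces the last $d$ entries of $\BLHSL{}$ to be real, that same imaginary shift does not affect real parts, whence $\RE[\BLHSL{}\bullet(z-p(\lambda))]=\RE\psi_z^*(\lambda)$ by \eqref{eq_g_granice_radialne_re_psi_z}. By Lemma~\ref{lem_D_fi_h_postaci_i_miary_graniczne}~\eqref{lem_D_fi_h_postaci_i_miary_graniczne_miara_graniczna_psi_z} and $\RE\psi_z\le 0$, this quantity is $\le 0$ a.e.; running this over a countable dense subset of $z\in D$ and using continuity in $z$ produces one full-measure set on which $\RE[\BLHSL{}\bullet(z-p(\lambda))]\le 0$ for every $z\in\overline D$. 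As $h\not\equiv 0$ (otherwise $\RE\psi_{\varphi(0)}(0)=0$) we have $\BLHSL{}\neq 0$ a.e., and a nonconstant real-linear functional attains no maximum over the open set $D$ at an interior point; the inequality is therefore strict on $D$, giving $p(\lambda)\in P_D(\BLHSL{})$.

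For the final statement I would first show $\tau(\DD)\subset\overline D$ by verifying the three conditions of Lemma~\ref{lem_D_fi_miary_graniczne} for $\tau$. Conditions \eqref{lem_D_fi_miary_graniczne_fi_j_in_H1} and \eqref{lem_D_fi_miary_graniczne_fi_j_has_boundary_measure} hold because $\tau_{1,\ldots,n-d}=\varphi_{1,\ldots,n-d}^a=\varphi_{1,\ldots,n-d}\in H^1(\DD,\CC)$ and the boundary measure of $\tau$ equals $\RE\tau^*\,d\LEBT+(0,\varrho')\,d\nu'$ with $\varrho'(\lambda)\in S_D$ for $\nu'$-a.e.~$\lambda$; condition \eqref{lem_D_fi_miary_graniczne_granice_radialne_w_domknieciu} follows from $\tau_{1,\ldots,n-d}^*=\varphi_{1,\ldots,n-d}^*$ and $\RE\tau^*=\RE\varphi^*$ a.e., combined once more with the imaginary-shift invariance of $\overline D$. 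Next I would verify $\RE\psi_z^{\tau,h}(\lambda)\le 0$ for all $\lambda\in\DD$ and $z\in D$. Writing $\psi_z^{\tau}=\psi_z^{\varphi^a}+\psi_0^{\tau^s}$ as in \eqref{eq_ldfhpimg_psi_z_eq_psi_z_fi_a_plus_psi_0_fi_s}, the first summand has non-positive real part because $\varphi^a$ and $h$ satisfy the hypotheses of Lemma~\ref{lem_D_fi_h_postaci_i_miary_graniczne} (as established within its proof). For the second, Observation~\ref{obs_boundary_measure_of_one_dimensional_psi_z} applied to $h_{n-d+1,\ldots,n}\in\HH^d$ and $\tau_{n-d+1,\ldots,n}^s$ shows the boundary measure of $\psi_0^{\tau^s}$ is $-\BLHL{n-d+1,\ldots,n}\bullet\varrho'(\lambda)\,d\nu'(\lambda)$, which is null by assumption; hence $\RE\psi_0^{\tau^s}\equiv 0$ and the desired inequality holds.

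It remains to prove the dichotomy and, in the interior case, that $\tau$ is a geodesic. If $\tau(\lambda_0)\in\partial D$ for some $\lambda_0\in\DD$, I would take a supporting functional $v\neq 0$ with $\RE((w-\tau(\lambda_0))\bullet v)\le 0$ on $\overline D$; then $(\tau(\cdot)-\tau(\lambda_0))\bullet v$ is holomorphic with non-positive real part vanishing at the interior point $\lambda_0$, so the maximum principle forces it to vanish identically, pinning $\tau(\DD)$ to the supporting hyperplane and hence to $\partial D$. Otherwise $\tau(\DD)\subset D$, and the only thing left for Lemma~\ref{lem_g_ogolne_warunki_wystarczajace} is $\RE\psi_{\tau(0)}^{\tau,h}(0)\ne 0$.

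This last step is the one I expect to be the main obstacle, and I would handle it by contradiction. If $\RE\psi_{\tau(0)}^{\tau,h}(0)=0$, then since $\tau(0)\in D$ and $\RE\psi_{\tau(0)}^{\tau,h}\le 0$ the maximum principle gives $\RE\psi_{\tau(0)}^{\tau,h}\equiv 0$, so that $\RE[\BLHSL{}\bullet(\tau(0)-\tau^*(\lambda))]=0$ for a.e.~$\lambda$. Subtracting this from the a.e.\ inequality $\RE[\BLHSL{}\bullet(z-\tau^*(\lambda))]\le 0$ (valid for all $z\in D$ by the argument of part~\eqref{th_warunek_konieczny_na_geodezyjna_w_Adn_granice_radialne}, the boundary measure of $\psi_z^{\tau,h}$ being absolutely continuous) yields $\RE[\BLHSL{}\bullet(z-\tau(0))]\le 0$ for all $z\in D$ at some $\lambda$ with $\BLHSL{}\ne 0$; that is, the nonconstant functional $\RE(\BLHSL{}\bullet\,\cdot\,)$ would attain a maximum over the open set $D$ at the interior point $\tau(0)$, which is impossible. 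Thus $\RE\psi_{\tau(0)}^{\tau,h}(0)\ne 0$, and Lemma~\ref{lem_g_ogolne_warunki_wystarczajace} shows $\tau$ is a complex geodesic for $D$.
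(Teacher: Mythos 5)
Your proposal is correct and follows essentially the same route as the paper's proof: parts \eqref{th_warunek_konieczny_na_geodezyjna_w_Adn_granice_radialne} and \eqref{th_warunek_konieczny_na_geodezyjna_w_Adn_czesc_singularna} are obtained from Lemma~\ref{lem_D_fi_h_postaci_i_miary_graniczne} together with the openness of the affine functional $z\mapsto\RE\left[\BLHSL{}\bullet(z-\varphi^*(\lambda))\right]$, and the ``moreover'' part via Lemma~\ref{lem_D_fi_miary_graniczne}, the decomposition $\psi_z^{\tau}=\psi_z^{\varphi^a}+\psi_0^{\tau^s}$, Observation~\ref{obs_boundary_measure_of_one_dimensional_psi_z} and Lemma~\ref{lem_g_ogolne_warunki_wystarczajace}. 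The only (harmless) deviation is the final step: the paper deduces $\RE\psi_{\tau(0)}(0)\neq 0$ by observing that $z\mapsto\RE\psi_z(0)$ is either open or identically equal to $\RE\psi_{\varphi(0)}(0)$ and is non-positive on $D$, whereas you reach the same conclusion by contradiction through the maximum principle and the openness of $z\mapsto\RE\left[\BLHSL{}\bullet z\right]$; both arguments are valid.
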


\begin{rem}Given a domain $D\in\mathcal{A}_d^n$, a complex geodesic $\varphi$ for $D$ and a map $h\in\HH^n_d$ satisfying the assumptions of Theorem~\ref{th_warunek_konieczny_na_geodezyjna_w_Adn}, one can find its part $\varphi^a$ employing the condition \eqref{th_warunek_konieczny_na_geodezyjna_w_Adn_granice_radialne} together with the equalities \eqref{eq_poisson_formula_fi_a_1_nd} and \eqref{eq_poisson_formula_re_fi_a_nd1_n}. Especially when the image of $D$ under the orthogonal projection on $\CC^{n-d}\times\RR^d$ is strictly convex in the geometric sense, the map $\varphi^a$ is uniquely determined by $h$ up to an additive constant from $\lbrace 0\rbrace^{n-d}\times(i\RR)^d$. As for the map $\varphi^s$, the last part of the conclusion yields that we can hardly say more about it than it is stated in the condition \eqref{th_warunek_konieczny_na_geodezyjna_w_Adn_czesc_singularna}.
\end{rem}

\begin{proof}[Proof of Theorem~\ref{th_warunek_konieczny_na_geodezyjna_w_Adn}]
The condition \eqref{th_warunek_konieczny_na_geodezyjna_w_Adn_czesc_singularna} is a direct consequence of Lemma~\ref{lem_D_fi_h_postaci_i_miary_graniczne}. From \eqref{eq_g_granice_radialne_re_psi_z} it follows that for $\LEBT$-a.e.~$\lambda\in\TT$ there holds the inequality
$$
\RE\left[\BLHSL{}\bullet(z-\varphi^*(\lambda))\right]\leq 0,\quad z\in D.
$$
By the assumptions we have $h\not\equiv 0$, so the above mapping of the variable $z$ is open for $\LEBT$-a.e.~$\lambda\in\TT$. This means that the weak inequality can be in fact replaced by the strong one, what, together with Lemma~\ref{lem_D_fi_h_postaci_i_miary_graniczne}~\eqref{lem_D_fi_h_postaci_i_miary_graniczne_h_in_Hnd}, gives the condition \eqref{th_warunek_konieczny_na_geodezyjna_w_Adn_granice_radialne}.

To prove the remaining part of the conclusion, take $\nu'$, $\varrho'$ and $\tau$ as in the assumptions. Since $\tau_{1,\ldots,n-d}^s\equiv 0\equiv \varphi_{1,\ldots,n-d}^s$, one has that
$$
\tau_{1,\ldots,n-d}\equiv\tau_{1,\ldots,n-d}^a\equiv\varphi_{1,\ldots,n-d}^a\equiv\varphi_{1,\ldots,n-d},
$$
so $\tau_{1,\ldots,n-d}\in H^1(\DD,\CC^{n-d})$. For $\LEBT$-a.e.~$\lambda\in\TT$ it holds that
$$
\RE(\tau^s)^*(\lambda)=0=\RE(\varphi^s)^*(\lambda),$$
what gives
$$
\left(\tau_{1,\ldots,n-d}^*(\lambda),\RE\tau_{n-d+1,\ldots,n}^*(\lambda)\right)=\left(\varphi_{1,\ldots,n-d}^*(\lambda),\RE\varphi_{n-d+1,\ldots,n}^*(\lambda)\right).
$$
Thus, Lemma~\ref{lem_D_fi_miary_graniczne} yields that $\tau(\DD)\subset\overline{D}$. If $\tau(\DD)\subset\partial D$, then we are done. In the opposite case we have $\tau(\DD)\subset D$ and, by Observation~\ref{obs_boundary_measure_of_one_dimensional_psi_z},
$$
\RE\psi_0^{\tau^s}\equiv\RE\psi_0^{\tau_{n-d+1,\ldots,n}^s, h_{n-d+1,\ldots,n}}\equiv 0
$$
and, similarly, $\RE\psi_0^{\varphi^s}\equiv 0$. Therefore, for every $z\in\CC^n$ one has that
$$
\RE\psi_z^{\tau}\equiv\RE\psi_z^{\tau^a}+\RE\psi_0^{\tau^s}\equiv\RE\psi_z^{\tau^a}\equiv\RE\psi_z^{\varphi^a}\equiv\RE\psi_z^{\varphi^a}+\RE\psi_0^{\varphi^s}\equiv\RE\psi_z.
$$
In particular, $\RE\psi_z^{\tau}(\lambda)\leq 0$ for all $\lambda\in\DD$ and $z\in D$. In view of Lemma~\ref{lem_g_ogolne_warunki_wystarczajace}, to complete the proof it suffices to show that $\RE\psi_{\tau(0)}(0)\neq 0$. But the map $z\mapsto\RE\psi_z(0)$ is either open or identically equal to $\RE\psi_{\varphi(0)}(0)$. Since it takes only non-positive values, in both cases its image has to lie in the interval $(-\infty,0)$.
\end{proof}

The following fact, under certain additional assumption, stands for an inverse of Theorem~\ref{th_warunek_konieczny_na_geodezyjna_w_Adn}.

\begin{thm}\label{th_warunek_wystarczajacy_na_geodezyjna_w_Adn}Let $D\in\mathcal{A}_d^n$ be a domain having bounded image under the projection on first $n-d$ coordinates and let $\varphi\in\OO(\DD,D)$ be a map with the boundary measure of the form
$$
\RE\varphi^*\,d\LEBT+(0,\varrho)\,d\nu,
$$
as in Lemma~\ref{lem_D_fi_miary_graniczne}. If there exists a map $h\in\HH^n_d$ satisfying the conditions \eqref{th_warunek_konieczny_na_geodezyjna_w_Adn_granice_radialne} and \eqref{th_warunek_konieczny_na_geodezyjna_w_Adn_czesc_singularna} from Theorem~\ref{th_warunek_konieczny_na_geodezyjna_w_Adn}, then $\varphi$ is a complex geodesic for $D$.
\end{thm}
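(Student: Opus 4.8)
The plan is to verify the two hypotheses of Lemma~\ref{lem_g_ogolne_warunki_wystarczajace} for the given map $h$, namely that $\RE\psi_z(\lambda)\le 0$ for all $\lambda\in\DD$, $z\in D$, and that $\RE\psi_{\varphi(0)}(0)\ne 0$; once these hold, the existence of a left inverse, and hence the fact that $\varphi$ is a complex geodesic, follows at once. The starting point is the additive splitting $\psi_z=\psi_z^{\varphi^a}+\psi_0^{\varphi^s}$ recorded in \eqref{eq_ldfhpimg_psi_z_eq_psi_z_fi_a_plus_psi_0_fi_s}, which reduces everything to analysing the two summands separately. The singular summand is disposed of exactly as in the proof of Lemma~\ref{lem_D_fi_h_postaci_i_miary_graniczne}: since $\varrho\,d\nu$ is the boundary measure of $\varphi_{n-d+1,\ldots,n}^s$ and $h_{n-d+1,\ldots,n}\in\HH^1$ by $h\in\HH^n_d$, Observation~\ref{obs_boundary_measure_of_one_dimensional_psi_z} gives that the boundary measure of $\psi_0^{\varphi^s}$ equals $-\BLHL{n-d+1,\ldots,n}\bullet\varrho(\lambda)\,d\nu(\lambda)$, which is null by the assumed condition \eqref{th_warunek_konieczny_na_geodezyjna_w_Adn_czesc_singularna}. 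Consequently $\RE\psi_0^{\varphi^s}\equiv 0$, so it suffices to control $\psi_z^{\varphi^a}$.

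The step where the boundedness of the projection of $D$ onto the first $n-d$ coordinates is essential—and which I expect to be the main obstacle—is pinning down the class of $\psi_z^{\varphi^a}$. Splitting $\psi_z^{\varphi^a}$ into its first-$(n-d)$ and last-$d$ blocks, the last block is handled by Observation~\ref{obs_boundary_measure_of_one_dimensional_psi_z} (with $k=d$), producing an element of $\MMM^1$ with absolutely continuous boundary measure. For the first block one uses that the boundedness hypothesis forces $\varphi_{1,\ldots,n-d}=\varphi_{1,\ldots,n-d}^a$ to be bounded holomorphic; then $\lambda\mapsto\lambda^{-1}(\varphi_{1,\ldots,n-d}(0)-\varphi_{1,\ldots,n-d}(\lambda))$ is again bounded holomorphic, and dotting it with $h_{1,\ldots,n-d}\in H^1$, together with the remaining manifestly $H^1$ terms in the formula for $\psi$, keeps the whole block in $H^1\subset\MMM^1$. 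Hence $\psi_z^{\varphi^a}\in\MMM^1$ with absolutely continuous boundary measure $\RE(\psi_z^{\varphi^a})^*\,d\LEBT$. It is precisely the $H^\infty$-regularity of $\varphi_{1,\ldots,n-d}$, rather than mere membership in $H^1$, that is needed here, which explains why the theorem is only a partial converse.

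Next I would compute the boundary values. By \eqref{eq_g_granice_radialne_re_psi_z} one has $\RE(\psi_z^{\varphi^a})^*(\lambda)=\RE[\BLHSL{}\bullet(z-(\varphi^a)^*(\lambda))]$; since the last $d$ components of $\BLHSL{}$ are real (as $h_{n-d+1,\ldots,n}\in\HH^1$), only $\RE\varphi^*_{n-d+1,\ldots,n}$ enters, so this coincides with $\RE[\BLHSL{}\bullet(z-p(\lambda))]$, where $p(\lambda):=(\varphi_{1,\ldots,n-d}^*(\lambda),\RE\varphi_{n-d+1,\ldots,n}^*(\lambda))$. By the assumed condition \eqref{th_warunek_konieczny_na_geodezyjna_w_Adn_granice_radialne}, $p(\lambda)\in P_D(\BLHSL{})$ for $\LEBT$-a.e.\ $\lambda$, and the defining strict inequality of $P_D$ yields $\RE(\psi_z^{\varphi^a})^*(\lambda)<0$ for a.e.\ $\lambda$ and every $z\in D$. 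Inserting this into the Poisson representation \eqref{eq_g_poisson_formula} for the element $\psi_z^{\varphi^a}\in\MMM^1$ gives $\RE\psi_z^{\varphi^a}\le 0$, and therefore $\RE\psi_z\le 0$ on $\DD$ for all $z\in D$, which is the first hypothesis of Lemma~\ref{lem_g_ogolne_warunki_wystarczajace}.

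Finally, for the non-degeneracy I would argue by contradiction: taking $z=\varphi(0)\in D$ gives $\RE\psi_{\varphi(0)}(0)\le 0$, and were this zero, the harmonic function $\RE\psi_{\varphi(0)}\le 0$ would attain an interior maximum and hence vanish identically by the maximum principle. But then $\psi_{\varphi(0)}^{\varphi^a}$ would have null boundary measure, contradicting the strict inequality $\RE(\psi_{\varphi(0)}^{\varphi^a})^*(\lambda)=\RE[\BLHSL{}\bullet(\varphi(0)-p(\lambda))]<0$ established above (valid a.e.\ precisely because $\varphi(0)\in D$ and $p(\lambda)\in P_D(\BLHSL{})$). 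Thus $\RE\psi_{\varphi(0)}(0)\ne 0$, and Lemma~\ref{lem_g_ogolne_warunki_wystarczajace} applies to conclude that $\varphi$ is a complex geodesic for $D$.
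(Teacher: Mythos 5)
Your proposal is correct and follows essentially the same route as the paper: both verify the hypotheses of Lemma~\ref{lem_g_ogolne_warunki_wystarczajace} by showing $\psi_z\in\MMM^1$ with absolutely continuous boundary measure (using the boundedness of $\varphi_{1,\ldots,n-d}$ for the first block and Observation~\ref{obs_boundary_measure_of_one_dimensional_psi_z} plus condition \eqref{th_warunek_konieczny_na_geodezyjna_w_Adn_czesc_singularna} for the last $d$ coordinates), then deduce $\RE\psi_z\leq 0$ from condition \eqref{th_warunek_konieczny_na_geodezyjna_w_Adn_granice_radialne} via the Poisson formula and settle $\RE\psi_{\varphi(0)}(0)\neq 0$ by the maximum principle. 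The only cosmetic difference is that you first split off $\psi_0^{\varphi^s}$ before decomposing coordinate-wise, whereas the paper absorbs the singular part directly through Observation~\ref{obs_boundary_measure_of_one_dimensional_psi_z} applied to the full boundary measure of $\varphi_{n-d+1,\ldots,n}$.
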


The assumption on $D$ from Theorem~\ref{th_warunek_wystarczajacy_na_geodezyjna_w_Adn} is fulfilled for example when $d=n$ (then $D$ is a convex tube domain; see \cite[Theorem 3.1]{zajac20161865}) and when $D$ is a bounded convex domain (cf.~\cite[Subsection 8.2]{jarnicki1993}). It is interesting whether the conclusion of the theorem is valid for an arbitrary $D\in\mathcal{A}_d^n$.

\begin{proof}Write $\varphi=(\varphi_1,\ldots,\varphi_n)$ and $h=(h_1,\ldots,h_n)$. One has that
$$
\psi_z^{\varphi,h}(\lambda) =\psi_{z_1}^{\varphi_1,h_1}(\lambda)+\ldots+\psi_{z_{n-d}}^{\varphi_{n-d},h_{n-d}}(\lambda)+\psi_{z_{n-d+1,\ldots,n}}^{\varphi_{n-d+1,\ldots,n},h_{n-d+1,\ldots,n}}(\lambda)
$$
for $\lambda\in\DD$ and $z=(z_1,\ldots,z_n)\in\CC^n$. If $j\in\lbrace 1,\ldots,n-d\rbrace$, then $\varphi_j$ is bounded, so $\psi_{z_j}^{\varphi_j,h_j}$ is of the class $H^1$. Moreover, by the assumption \eqref{th_warunek_konieczny_na_geodezyjna_w_Adn_czesc_singularna} and Observation~\ref{obs_boundary_measure_of_one_dimensional_psi_z}, the boundary measure of $\psi_{z_{n-d+1,\ldots,n}}^{\varphi_{n-d+1,\ldots,n},h_{n-d+1,\ldots,n}}$ equals to
$$
\RE\left[\BLHL{n-d+1,\ldots,n}\bullet(z_{n-d+1,\ldots,n}-\varphi_{n-d+1,\ldots,n}^*(\lambda))\right]\,d\LEBT(\lambda).
$$
From these considerations it follows that $\psi_z^{\varphi,h}\in\MMM^1$ and its boundary measure is equal to
$$
\RE\left[\BLHSL{}\bullet(z-\varphi^*(\lambda))\right]\,d\LEBT(\lambda).
$$
In view of the assumption \eqref{th_warunek_konieczny_na_geodezyjna_w_Adn_granice_radialne}, this measure is negative when $z\in D$, what leads to the conclusion that $\RE\psi_z^{\varphi,h}(\lambda)\leq 0$ for all $\lambda\in\DD$ and $z\in D$. Finally, $\RE\psi_{\varphi(0)}^{\varphi,h}(0)\neq 0$, because otherwise the maximum principle yields that $\RE\psi_{\varphi(0)}^{\varphi,h}\equiv 0$, what contradicts the assumption \eqref{th_warunek_konieczny_na_geodezyjna_w_Adn_granice_radialne}. Now Lemma~\ref{lem_g_ogolne_warunki_wystarczajace} does the job.
\end{proof}

\begin{ex}Consider the domain
$$
D:=\lbrace (z_1,z_2)\in\CC^2:|z_1|^2+|\RE z_2|^2<1\rbrace.
$$
It belongs to the family $\mathcal{A}_1^2$. One can check that $W_D=\CC\times\RR$, $S_D=\lbrace 0\rbrace$, and
$$
P_D(v)=\left\lbrace\frac{\bar v}{\|v\|}\right\rbrace,\;v\in(\CC\times\RR)_*.
$$

Let $\varphi=(\varphi_1,\varphi_2)\in\OO(\DD,D)$ be a complex geodesic for $D$. Take a map $h=(h_1,h_2)\in\HH^2_1$ satisfying the assumptions of Theorem~\ref{th_warunek_konieczny_na_geodezyjna_w_Adn}. According to Lemma~\ref{lem_D_fi_miary_graniczne}, we have $\varphi^s\equiv 0$ and $\varphi^a\equiv\varphi$. Therefore, from Theorem~\ref{th_warunek_konieczny_na_geodezyjna_w_Adn} we conclude that
$$
\left(\varphi_1^*(\lambda),\RE\varphi_2^*(\lambda)\right) = \left(\LBHSL{1},\BLHL{2}\right)\cdot\left\|\Bigl(\BLHSL{1},\BLHL{2}\Bigr)\right\|^{-1}
$$
for $\LEBT$-a.e.~$\lambda\in\TT$. Now, one can recover $\varphi$, up to a constant $b\in\lbrace 0\rbrace\times(i\RR)$, from the equalities \eqref{eq_poisson_formula_fi_a_1_nd} and \eqref{eq_poisson_formula_re_fi_a_nd1_n}.

On the other hand, fix a mapping $h=(h_1,h_2)\in\HH^2_1$ which does not vanish identically and define, for $\lambda\in\DD$,
\begin{align*}
\varphi_1(\lambda)&:=\frac{1}{2\pi}\int_{\TT}\frac{1-|\lambda|^2}{|\zeta-\lambda|^2}\cdot\zeta\overline{h_1^*(\zeta)}\cdot\left\|\Bigl(\bar\zeta h_1^*(\zeta),\bar\zeta h_2(\zeta)\Bigr)\right\|^{-1} d\LEBT(\zeta),\\
\varphi_2(\lambda)&:=\frac{1}{2\pi}\int_{\TT}\frac{\zeta+\lambda}{\zeta-\lambda}\cdot\bar\zeta h_2(\zeta)\cdot\left\|\Bigl(\bar\zeta h_1^*(\zeta),\bar\zeta h_2(\zeta)\Bigr)\right\|^{-1} d\LEBT(\zeta).
\end{align*}
Assume, additionally, that $\varphi(0)\in D$, where $\varphi=(\varphi_1,\varphi_2)$. Then Theorem~\ref{th_warunek_wystarczajacy_na_geodezyjna_w_Adn} applied to $\varphi$ and $h$ guarantees that $\varphi$ is a complex geodesic for $D$, but only under the additional assumption that the function $\varphi_1$ is holomorphic. This can, however, fail, as it is shown by the quite simple example of $h(\zeta):=(\zeta^2,0)$, because then $\varphi_1(\lambda)=\bar\lambda$.

In general situation, the question on holomorphicity of the first $n-d$ components of a map $\varphi$ obtained in such a way, strongly depends on the geometry of $D$. It is worthy to point out that in tube domains (that is, those from $\mathcal{A}_n^n$), considered in \cite{zajac20151337} and \cite{zajac20161865}, this problem did not arise, because there everything about $\varphi$ was expressed in terms of its real part and the entire $\varphi$ was defined in a similar way as $\varphi_2$ in our example.
\end{ex}

\begin{ex}Let
$$
D:=\lbrace(z_1,z_2)\in\CC^2:\RE z_2>|z_1|^2\rbrace.
$$
As previously, this domain belongs to the family $\mathcal{A}_1^2$. We have
$$
W_D=(\CC\times(-\infty,0))\cup\lbrace (0,0)\rbrace\text{ and }S_D=[0,\infty).
$$
Moreover,
$$
P_D(v)=\left\lbrace\left(-\frac{\bar v_1}{2v_2},\left|\frac{v_1}{2v_2}\right|^2\right)\right\rbrace,
$$
when $v=(v_1,v_2)\in \CC\times(-\infty,0)$, and $P_D(v)=\varnothing$ otherwise.

Let $\varphi=(\varphi_1,\varphi_2)\in\OO(\DD,D)$ be a complex geodesic for $D$ with the boundary measure written as in Lemma~\ref{lem_D_fi_miary_graniczne}. Take $h=(h_1,h_2)\in\HH^2_1$ as in Theorem~\ref{th_warunek_konieczny_na_geodezyjna_w_Adn}. From the condition \eqref{th_warunek_konieczny_na_geodezyjna_w_Adn_granice_radialne} it follows that $\BLHSL{}\in W_D$ for $\LEBT$-a.e.~$\lambda\in\TT$. Thus $h_2\in-\HH^1_+$ and $h_2\not\equiv 0$, because $h\not\equiv 0$. In particular, $h_2$ has at most one zero on $\TT$ (counting without multiplicities).
By Theorem~\ref{th_warunek_konieczny_na_geodezyjna_w_Adn} \eqref{th_warunek_konieczny_na_geodezyjna_w_Adn_czesc_singularna}, $\BLHL{2}\varrho(\lambda)=0$ for $\nu$-a.e.~$\lambda\in\TT$, so
$$
\varrho\,d\nu=\alpha\delta_{\lambda_0}
$$
for some $\alpha\geq 0$ and $\lambda_0\in\TT$ such that $\alpha h_2(\lambda_0)=0$. This gives that
$$
\varphi_2^s(\lambda)=\frac{\alpha}{2\pi}\frac{\lambda_0+\lambda}{\lambda_0-\lambda},\quad\lambda\in\DD.
$$
Moreover, it is clear that $\varphi_1^s\equiv 0$. As for the map $\varphi^a$, for $\LEBT$-a.e.~$\lambda\in\TT$ one has that
$$
\left(\varphi_1^*(\lambda),\RE\varphi_2^*(\lambda)\right)=\left(-\frac{\LBHSL{1}}{\Bigl. 2\BLHL{2}\Bigr.},\left|\frac{\BLHSL{1}}{\Bigl. 2\BLHL{2}\Bigr.}\right|^2\right),
$$
by Theorem~\ref{th_warunek_konieczny_na_geodezyjna_w_Adn} \eqref{th_warunek_konieczny_na_geodezyjna_w_Adn_granice_radialne}. Now, we can recover $\varphi^a$ employing the equalities \eqref{eq_poisson_formula_fi_a_1_nd} and \eqref{eq_poisson_formula_re_fi_a_nd1_n}. Finally, having $\varphi^a$ and $\varphi^s$ calculated, we are able to derive a formula for $\varphi$, an arbitrarily chosen complex geodesic for $D$.
\end{ex}

\section{$\mathbb C$-convexity of semitube domains}\label{sect:ccstd}

The aim of this section is to prove the following result.

\begin{thm}\label{thm:ccstd}Let $D$ be a domain in $\mathbb R^{2n-1}$ such that $\{x\in\mathbb R^{2n-1}:x'=a'\}\not\subset\partial D$ for any $a\in\partial D$. Then $\mathcal S_D$ is $\mathbb C$-convex if and only if it is convex.
\end{thm}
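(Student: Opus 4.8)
The plan is to prove the nontrivial implication, namely that $\mathbb C$-convexity of $\mathcal S_D$ forces convexity; the converse is the already recorded fact that convex domains are $\mathbb C$-convex. Since $\Pi$ is a surjective $\RR$-linear map and $\mathcal S_D=\Pi^{-1}(D)$, the domain $\mathcal S_D$ is convex exactly when $D$ is, so it suffices to deduce convexity of $D$. I would study $L\cap\mathcal S_D$ for complex affine lines $L=a+\CC b$. Writing $\zeta=s+it$ and using $\RR$-linearity of $\Pi$ one gets $\Pi(a+\zeta b)=\Pi(a)+s\,\Pi(b)+t\,\Pi(ib)$, so $L\cap\mathcal S_D$ is the preimage of $D$ under the $\RR$-affine map $(s,t)\mapsto\Pi(a)+s\,\Pi(b)+t\,\Pi(ib)$. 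For $b=(b',b_n)$ one computes $\Pi(b)=(b'_{\RR},\re b_n)$ and $\Pi(ib)=(Jb'_{\RR},-\im b_n)$, where $b'_{\RR}\in\RR^{2n-2}$ is the realification of $b'$ and $J$ is multiplication by $i$ on $\RR^{2n-2}\cong\CC^{n-1}$.

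Two families of lines carry all the information. Taking $b=(0,1)$ gives $L\cap\mathcal S_D=\{\zeta:\re\zeta\in D_{a'}\}$, where $D_{a'}:=\{r\in\RR:(a',r)\in D\}$ is the slice of $D$ in the $x_{2n-1}$-direction; this subset of $\CC$ is connected and simply connected if and only if $D_{a'}$ is an interval. Hence $\mathbb C$-convexity forces every slice to be an interval, so $D=\{(x',r):\alpha(x')<r<\beta(x')\}$ for an upper semicontinuous $\alpha$ and a lower semicontinuous $\beta$ on the shadow $\Omega:=\{x'\in\RR^{2n-2}:D_{x'}\neq\varnothing\}$. On the other hand, if $b'\neq0$ then $\Pi(b)$ and $\Pi(ib)$ are linearly independent, the affine map above is a bijection onto the plane $P=\Pi(a)+\mathrm{span}\{\Pi(b),\Pi(ib)\}$, and $D\cap P$ is connected and simply connected. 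These $P$ are precisely the graphs $r=g(x')$ of $\RR$-affine functions $g$ over complex affine lines in $\CC^{n-1}$, so for every complex line $\ell'\subset\CC^{n-1}$ and every $\RR$-affine $g$ on $\ell'$ the set $\{x'\in\ell':\alpha(x')<g(x')<\beta(x')\}$ is connected and simply connected.

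Next I would reduce to $n=2$. For a complex line $\ell'\subset\CC^{n-1}$ the set $\mathcal S_D\cap(\ell'\times\CC)$ is $\mathbb C$-convex (a complex line contained in $\ell'\times\CC$ is a complex line of $\CC^n$), and it is exactly the semitube in $\CC^2$ with base $D\cap(\ell'\times\RR)$; moreover its relative boundary satisfies $\partial_{\mathrm{rel}}(D\cap(\ell'\times\RR))\subset\partial D\cap(\ell'\times\RR)$, so the hypothesis on $D$ is inherited and no $x_{2n-1}$-line lies in the relative boundary of this base. Granting the two-dimensional case, each such base $D\cap(\ell'\times\RR)$ is convex. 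Any two points of $D$ with distinct $x'$-parts lie over the unique complex line $\ell'$ joining those parts, hence are joined by a segment inside the convex set $D\cap(\ell'\times\RR)\subset D$; two points with equal $x'$-part are joined by an $x_{2n-1}$-segment lying in $D$ by the slice property. Thus $D$ is convex, and it only remains to settle $n=2$, i.e.\ $D\subset\RR^3$ --- the $\CC^2$ semitubes of \cite{burgues2012685} and \cite{kosinski2015241}, which I would treat directly.

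For $n=2$ everything reduces to the following planar core, and this is where the real work lies: given an upper semicontinuous $\alpha$ and a lower semicontinuous $\beta$ on $\Omega\subset\RR^2$ such that $\{\alpha<g<\beta\}$ is connected and simply connected for every $\RR$-affine $g$, and such that no vertical line lies in $\partial D$, one must show that $\alpha$ is convex, $\beta$ is concave and $\Omega$ is convex (equivalently, $D$ is convex). I would argue by contradiction: a failure of any of the three produces points $x_0,x_1$ in some section $\{\alpha<g<\beta\}$ and an interior point $x_*$ of the segment $[x_0,x_1]$ with $\alpha(x_*)\geq g(x_*)$ or $g(x_*)\geq\beta(x_*)$ or $x_*\notin\Omega$, so $x_*$ is absent from the section although the section, being connected, contains an arc from $x_0$ to $x_1$. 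The main obstacle is to convert this ``dent'' into a genuine topological defect: by shifting and tilting $g$ I expect to force the obstacle $\{\alpha\geq g\}\cup\{g\geq\beta\}\cup(\RR^2\setminus\Omega)$ to meet $[x_0,x_1]$ in a compact piece that is surrounded by the section, so that the arc together with the segment encircles it and cannot be contracted --- contradicting simple connectivity --- while in the separating case it contradicts connectivity. This is exactly the step that uses the hypothesis that no $x_{2n-1}$-line lies in $\partial D$: it prevents the obstacle from escaping along a flat vertical part of the boundary, which is the very mechanism producing the $\mathbb C$-convex but non-convex cylinders of Remark~\ref{rem:example}~(a). Carrying out this compactness-and-encirclement argument uniformly over the three cases is the heart of the proof.
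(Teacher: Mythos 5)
Your overall strategy is genuinely different from the paper's: you work directly with the definition of $\mathbb C$-convexity (connectedness and simple connectivity of intersections with complex lines), slice to reduce to $n=2$, and then try to derive a planar topological contradiction, whereas the paper works dually, passing through linear convexity, the connectedness of the set $\Gamma_{\mathcal S_D}(w)$ of separating hyperplanes in $\mathbb P^{n-1}$, compactness of the Grassmannian, and a carefully chosen point $\tilde a$ at the top of a maximal vertical boundary segment. The preparatory steps of your reduction (vertical lines force the $x_{2n-1}$-slices to be intervals; non-vertical lines give the sections $\{\alpha<g<\beta\}$ over affine graphs; inheritance of the hypotheses by the slices $D\cap(\ell'\times\RR)$) are sound.

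However, there is a genuine gap, and it sits exactly where you place ``the heart of the proof'': the claim that a failure of convexity can be converted, by ``shifting and tilting $g$'', into a compact piece of the obstacle $\{\alpha\geq g\}\cup\{g\geq\beta\}\cup(\RR^2\setminus\Omega)$ that is encircled by the section. This is asserted as an expectation, not proved, and it is precisely the step where the hypothesis that no line $\{x'=a'\}$ lies in $\partial D$ must do real work. The cylinder counterexample of Remark~\ref{rem:example}~(a) shows why this cannot be waved through: there the obstacle is a single unbounded connected set reaching the ``dent'' from infinity, every section is connected and simply connected, and no encirclement ever occurs. You have not exhibited the mechanism by which the no-vertical-line hypothesis rules out this escape route --- e.g.\ how tilting $g$ (which is constrained by the requirement $\alpha(x_i)<g(x_i)<\beta(x_i)$ at the endpoints $x_0,x_1$) forces the relevant component of the obstacle to become bounded, nor how to handle the case where the obstacle merely disconnects the section (connectivity can be restored by a detour unless one controls the global shape of the section). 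The paper resolves this difficulty by an entirely different device: it isolates the point $\tilde a=(a',a_{2n-1}+t_0)$ at the top of the maximal vertical segment in $\partial D$ (finiteness of $t_0$ is where the hypothesis enters), and then uses the connectedness of $\Gamma_{\mathcal S_D}(\tilde w)$ together with a limiting argument in the Grassmannian (and, in the degenerate case, Lemma~\ref{lem:t}) to manufacture a supporting real hyperplane through $a$, contradicting non-convexity. Until you supply a complete proof of your planar core --- in particular, an argument that actually uses the no-vertical-line hypothesis --- the proposal does not constitute a proof of the theorem.
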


\begin{rem}\label{rem:example}(a) If $D=\Omega\times\mathbb R$ for some domain $\Omega\subset\mathbb R^{2n-2}$ (i.e.~$\{x\in\mathbb R^{2n-1}:x'=a'\}\subset\partial D$ for any $a\in\partial D$) then the assertion of Theorem~\ref{thm:ccstd} is no longer true. Indeed, if $\Omega$ is non-convex and $\mathbb C$-convex (as a domain considered in $\mathbb C^{n-1}$), then $\mathcal S_D$ is non-convex but $\mathbb C$-convex semitube domain.

(b) Although the condition imposed onto the domain $D$ in Theorem~\ref{thm:ccstd} seems to be a technical one, the example in part (a) shows that some restriction of this kind is needed, if we want to have the equivalence of the notions of convexity and $\mathbb C$-convexity in the class of semitube domains. It is an open question whether the condition assumed in Theorem~\ref{thm:ccstd} is a necessary one for the aforementioned equivalence.
\end{rem}

In what follows we shall need the notion of linear convexity. Recall that a domain $D\subset\mathbb C^n$ is called (cf.~\cite{hormander1994}, \cite{andersson2004}) \emph{linearly convex}, if its complement is a union of affine complex hyperplanes. Note that any $\mathbb C$-convex domain is linearly convex (cf.~\cite[Theorem 2.3.9]{andersson2004}), but the converse is not true (cf.~\cite[p.~26]{andersson2004} or \cite[Theorem 1~(ii)]{nikolov2008149}).

We begin with the following simple observation which is crucial in the proof of Theorem~\ref{thm:ccstd}.

\begin{prop}\label{prop:lcstd}Let $D$ be a domain in $\mathbb R^{2n-1}$, $n>1$. Then the following conditions are equivalent:
\begin{enumerate}[(i)]
\item\label{item:lci}$\mathcal S_D$ is linearly convex,
\item\label{item:lcii}for any $a=(a',a_{2n-1})\in\mathbb R^{2n-1}\setminus D$ there exists affine subspace $H\subset\mathbb R^{2n-1}$, $\codim_{\mathbb R}H\in\{1,2\}$, such that $a\in H$, $H\cap D=\varnothing$,
\begin{equation}\label{eq:H21}
H=\begin{cases}\{x\in\mathbb R^{2n-1}:b\bullet(x'-a')=\tilde b\bullet(x'-a')=0\},\quad&\textnormal{if }\codim_{\mathbb R}H=2\\\{x\in\mathbb R^{2n-1}:x_{2n-1}=a_{2n-1}-b\bullet(x'-a')\},\quad&\textnormal{if }\codim_{\mathbb R}H=1\end{cases}
\end{equation}
for some $b\in\mathbb R^{2n-2}$, where $\tilde b=(\tilde b_1,\dots,\tilde b_{2n-2})$,
\begin{equation*}\label{eq:tildeb}
\tilde b_j=\begin{cases}-b_{j+1},\quad&\textnormal{if }j\textnormal{ is odd}\\b_{j-1},\quad&\textnormal{if }j\textnormal{ is even}\end{cases},\quad j=1,2,\dots,2n-2;
\end{equation*}
moreover, if $\codim_{\mathbb R}H=2$, then $b\neq0$.
\end{enumerate}
\end{prop}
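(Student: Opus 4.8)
The plan is to translate the linear convexity of $\mathcal{S}_D$ into a statement about the $\Pi$-images of affine complex hyperplanes, exploiting the fact that $\mathcal{S}_D=\Pi^{-1}(D)$ is invariant under the translations $z\mapsto z+ite_n$, $t\in\mathbb{R}$, which are precisely the fibres of $\Pi$. First I would record two consequences of this invariance. On one hand, a point $w\in\mathbb{C}^n$ lies outside $\mathcal{S}_D$ if and only if $a:=\Pi(w)\notin D$, a condition depending only on $a$. On the other hand, for any affine complex hyperplane $H_{\mathbb{C}}\subset\mathbb{C}^n$ one has $H_{\mathbb{C}}\cap\mathcal{S}_D=\varnothing$ if and only if $\Pi(H_{\mathbb{C}})\cap D=\varnothing$; and since $\mathcal{S}_D$ is $ite_n$-invariant, if some $H_{\mathbb{C}}$ through a point $w$ avoids $\mathcal{S}_D$, then each translate $H_{\mathbb{C}}+ite_n$ avoids $\mathcal{S}_D$ as well and passes through $w+ite_n$, while $\Pi(H_{\mathbb{C}}+ite_n)=\Pi(H_{\mathbb{C}})$. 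Hence linear convexity of $\mathcal{S}_D$ is equivalent to the assertion that for every $a\notin D$ there is a complex hyperplane $H_{\mathbb{C}}$ with $a\in\Pi(H_{\mathbb{C}})$ and $\Pi(H_{\mathbb{C}})\cap D=\varnothing$.

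The heart of the proof is then a dictionary computing $\Pi(H_{\mathbb{C}})$ explicitly. Writing a hyperplane through $w$ as $\{z:\gamma\bullet(z-w)=0\}$ with $\gamma=(\gamma',\gamma_n)\in(\mathbb{C}^{n-1}\times\mathbb{C})_*$, I would split into two cases according to $\gamma_n$. If $\gamma_n=0$, then $z_n$ is unconstrained, so $x_n=\re z_n$ ranges over all of $\mathbb{R}$, while the vanishing of $\re(\gamma'\bullet(z'-w'))$ and $\im(\gamma'\bullet(z'-w'))$ yields two real linear equations in $x'-a'\in\mathbb{R}^{2n-2}$; a short computation identifies them with $b\bullet(x'-a')=0$ and $\tilde b\bullet(x'-a')=0$, where $b$ encodes the real part of the complex-linear functional $\gamma'$ and the passage $b\mapsto\tilde b$ is exactly the real matrix of multiplication by $i$ (this is the content of the stated formula for $\tilde b$). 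Since $\gamma_n=0$ forces $\gamma'\neq0$, we get $b\neq0$, and $\Pi(H_{\mathbb{C}})$ is precisely the codimension-$2$ subspace of (\ref{eq:H21}). If $\gamma_n\neq0$, I normalize $\gamma_n=1$ and solve $z_n=w_n-\gamma'\bullet(z'-w')$; as $z'$ runs over $\mathbb{C}^{n-1}$ the coordinate $x'$ runs over all of $\mathbb{R}^{2n-2}$ and $x_n=\re z_n=a_{2n-1}-b\bullet(x'-a')$ with the same vector $b$, so $\Pi(H_{\mathbb{C}})$ is precisely the codimension-$1$ graph of (\ref{eq:H21}).

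With the dictionary in hand the equivalence follows by reading it in both directions. For $(\ref{item:lci})\Rightarrow(\ref{item:lcii})$, given $a\notin D$ I choose any $w$ with $\Pi(w)=a$; linear convexity provides a complex hyperplane through $w$ avoiding $\mathcal{S}_D$, and its $\Pi$-image is, by the two cases, a subspace $H$ of one of the forms (\ref{eq:H21}) containing $a$ and disjoint from $D$. For $(\ref{item:lcii})\Rightarrow(\ref{item:lci})$, given $w\notin\mathcal{S}_D$ I set $a=\Pi(w)$, take the subspace $H$ furnished by $(\ref{item:lcii})$, and run the formulas backwards to build the complex hyperplane through $w$ whose $\Pi$-image equals $H$ (in the codimension-$2$ case $\{z:\gamma'\bullet(z'-w')=0\}$, in the codimension-$1$ case $\{z:\gamma'\bullet(z'-w')+(z_n-w_n)=0\}$). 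Since $\Pi(H_{\mathbb{C}})=H$ is disjoint from $D$, the hyperplane avoids $\mathcal{S}_D$ and passes through $w$, giving linear convexity.

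The genuinely conceptual step is the invariance reduction in the first paragraph, which is what allows the whole problem to descend from $\mathbb{C}^n$ to $\mathbb{R}^{2n-1}$; everything afterwards is bookkeeping. The part demanding most care is verifying that $\Pi(H_{\mathbb{C}})$ equals (rather than merely is contained in) the stated $H$ — that is, that $x'$ sweeps out all of $\mathbb{R}^{2n-2}$ in each case — and matching indices so that the real part of $\gamma'$ produces $b$ and the imaginary part produces exactly the $\tilde b$ of the statement. I expect no serious obstacle beyond this routine linear-algebra identification.
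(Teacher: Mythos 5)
Your proposal is correct and follows essentially the same route as the paper's proof: both establish the dictionary $L\leftrightarrow\Pi(L)$ between affine complex hyperplanes and the real subspaces of the form \eqref{eq:H21}, splitting into the cases $\gamma_n=0$ (codimension $2$) and $\gamma_n\neq 0$ (codimension $1$), and read the equivalence off in both directions. Your preliminary reduction via the $z\mapsto z+ite_n$ invariance is a clean way of packaging what the paper does implicitly (and it conveniently disposes of the point, glossed over in the paper, that the constructed hyperplane must pass through the given $w$ and not merely through $\iota(\Pi(w))$), but it is the same argument in substance.
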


Let $\iota:\mathbb R^{2n-1}\longrightarrow\mathbb C^n$ be defined by
\begin{equation*}
\iota(x_1,x_2,\dots,x_{2n-1}):=(x_1+ix_2,\dots,x_{2n-3}+ix_{2n-2},x_{2n-1}).
\end{equation*}
Note that $\Pi\circ\iota$ is the identity of $\mathbb R^{2n-1}$.

\begin{proof}[Proof of Proposition~\ref{prop:lcstd}]\textit{(\ref{item:lci})}$\Longrightarrow$\textit{(\ref{item:lcii})}. Fix $a\in\mathbb R^{2n-1}\setminus D$. Since $\mathcal S_D$ is linearly convex, there exists an affine complex hyperplane $L\subset\mathbb C^n$ such that $\iota(a)\in L$ and $L\cap\mathcal S_D=\varnothing$. Observe that
\begin{equation*}
L=\{z\in\mathbb C^n:\alpha\bullet(z-\iota(a))=0\}
\end{equation*}
for some $\alpha=(\alpha',\alpha_n)\in(\mathbb C^n)_*$.

If $\alpha_n=0$ then $\alpha'\neq0$ and $z\in L$ if and only if $\alpha'\bullet(z'-\iota(a)')=0$, i.e.~$H:=\Pi(L)$ is of the form (\ref{eq:H21}), $\codim_{\mathbb R}H=2$, with
\begin{equation}\label{eq:b}
b_j:=\begin{cases}\re\alpha_{(j+1)/2},\quad&\textnormal{if }j\textnormal{ is odd}\\-\im\alpha_{j/2},\quad&\textnormal{if }j\textnormal{ is even}\end{cases},\quad j=1,2,\dots,2n-2.
\end{equation}

If $\alpha_n\neq0$ then without loss of generality we may assume that $\alpha_n=1$. Hence $z\in L$ if and only if
\begin{align*}
\re z_n&=a_{2n-1}-\re\left(\alpha'\bullet(z'-\iota(a)')\right),\\
\im z_n&=-\im\left(\alpha'\bullet(z'-\iota(a)')\right),
\end{align*}
i.e.~$H:=\Pi(L)$ is of the form (\ref{eq:H21}), $\codim_{\mathbb R}H=1$, with $b=(b_1,b_2,\dots,b_{2n-2})$ defined by (\ref{eq:b}).

\textit{(\ref{item:lcii})}$\Longrightarrow$\textit{(\ref{item:lci})}. Take arbitrary $w\in\mathbb C^n\setminus\mathcal S_D$. Let $H$ be as in (ii) for $a:=\Pi(w)\notin D$.

If $\codim_{\mathbb R}H=2$ then put
\begin{equation*}
L:=\Pi^{-1}(H)=\{z\in\mathbb C^n:b\bullet(\Pi(z)'-a')=\tilde b\bullet(\Pi(z)'-a')=0\}.
\end{equation*}
Note that $z\in L$ if and only if
\begin{equation*}
0=b\bullet(\Pi(z)'-a')+i(\tilde b\bullet(\Pi(z)'-a'))=\sum_{j=1}^{n-1}(b_{2j-1}-ib_{2j})(z_j-\iota_j(a)),
\end{equation*}
i.e.~$L=\{z\in\mathbb C^n:\alpha\bullet(z-\iota(a))=0\}$ with $\alpha=(\alpha_1,\dots,\alpha_n)\in(\mathbb C^n)_*$, where
\begin{equation}\label{eq:alpha'}
\alpha_j:=b_{2j-1}-ib_{2j},\quad j=1,2,\dots,n-1,\qquad\alpha_n:=0,
\end{equation}
is affine complex hyperplane in $\mathbb C^n$ with $w\in L$ and $L\cap\mathcal S_D=\varnothing$.

If $\codim_{\mathbb R}H=1$ then observe that
\begin{equation*}
\Pi^{-1}(H)=\{z\in\mathbb C^n:\re z_n=a_{2n-1}-b\bullet(\Pi(z)'-a')\}.
\end{equation*}
Put
\begin{equation*}
L:=\{z\in\Pi^{-1}(H):\im z_n=-\tilde b\bullet(\Pi(z)'-a')\}.
\end{equation*}
Note that $z\in L$ if and only if
\begin{align*}
0&=b\bullet(\Pi(z)'-a')+\re z_n-a_{2n-1}+i(\tilde b\bullet(\Pi(z)'-a')+\im z_n)\\{}&=\sum_{j=1}^{n-1}(b_{2j-1}-ib_{2j})(z_j-\iota_j(a))+z_n-\iota_n(a),
\end{align*}
i.e.~$L=\{z\in\mathbb C^n:\alpha\bullet(z-\iota(a))=0\}$ with $\alpha=(\alpha_1,\dots,\alpha_n)\in(\mathbb C^n)_*$, where
\begin{equation*}
\alpha_j:=b_{2j-1}-ib_{2j},\quad j=1,2,\dots,n-1,\qquad\alpha_n:=1,
\end{equation*}
is affine complex hyperplane in $\mathbb C^n$ with $w\in L$ and $L\cap\mathcal S_D=\varnothing$.
\end{proof}

For a domain $G\subset\mathbb C^n$ and a point $w\in\mathbb C^n$, we denote by $\Gamma_G(w)$ the set of all complex hyperplanes $L$ such that $(w+L)\cap G=\varnothing$. One may identify this set with a subset of complex projective space $\mathbb P^{n-1}$: here $L=\{z\in\mathbb C^n:b\bullet z=0\}$ is identified with $[b]\in\mathbb P^{n-1}$. In the proof of Theorem~\ref{thm:ccstd} we shall use the following characterization of $\mathbb C$-convexity: if a domain $G\subset\mathbb C^n$, $n>1$, is $\mathbb C$-convex then for any $w\in\partial G$ the set $\Gamma_G(w)$ is non-empty and connected (cf.~\cite[p.~46]{andersson2004}).

Let $0\leq d\leq k$ be two integers. The Grassmann manifold $\Gr(d,\mathbb R^k)$ is the set of all $d$-dimensional real subspaces of $\mathbb R^k$ which is topologized as a quotient space (see e.g.~\cite[p.~56]{milnor1974} for details). In what follows we shall use the following result.

\begin{lem}[\cite{milnor1974}, Lemma 5.1]\label{lem:compact}The Grassmann manifold $\Gr(d,\mathbb R^k)$ is compact.
\end{lem}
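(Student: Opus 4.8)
The plan is to exhibit $\Gr(d,\RR^k)$ as the continuous image of a compact space and then invoke the elementary fact that a continuous image of a compact space is compact. The compact space I would use is the Stiefel manifold $V$ of orthonormal $d$-frames in $\RR^k$, i.e.\ the set of all ordered tuples $(v_1,\dots,v_d)$ of mutually orthogonal unit vectors of $\RR^k$.

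First I would check that $V$ is compact. Viewing each frame as a point of the $d$-fold product $\SS^{k-1}\times\cdots\times\SS^{k-1}$, one sees that $V$ is the subset defined by the orthogonality relations $v_i\bullet v_j=0$ for $i\neq j$. These are closed conditions, so $V$ is a closed subset of a finite product of spheres; since such a product is compact and a closed subset of a compact space is compact, $V$ is compact.

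Next I would consider the canonical map $q\colon V\longrightarrow\Gr(d,\RR^k)$ that sends an orthonormal $d$-frame to the $d$-dimensional subspace it spans. This map is surjective, because every $d$-dimensional subspace of $\RR^k$ possesses an orthonormal basis (apply Gram--Schmidt to any basis). By the very definition of the topology on $\Gr(d,\RR^k)$ as the quotient topology induced by passing from frames to their spans, $q$ is continuous. Hence $\Gr(d,\RR^k)=q(V)$ is the continuous image of the compact space $V$, and is therefore compact.

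The argument is essentially free of obstacles: the only points needing verification---that the orthonormality relations are closed, that $q$ is surjective via Gram--Schmidt, and that $q$ is continuous for the quotient topology---are all routine, and the conceptual crux is simply the realization of the Grassmannian as a quotient of the compact Stiefel manifold.
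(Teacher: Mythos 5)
Your proof is correct and is essentially the argument of \cite[Lemma 5.1]{milnor1974} itself, which the paper simply cites: realize $\Gr(d,\RR^k)$ as the continuous image of the compact Stiefel manifold of orthonormal $d$-frames under the quotient map. Nothing further is needed.
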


\begin{proof}[Proof of Theorem~\ref{thm:ccstd}]Assume $\mathcal S_D$ is $\mathbb C$-convex. It suffices to show that $D$ is convex. Suppose $D$ is not convex, i.e.~there is a point $a\in\partial D$ such that for any affine real hyperplane $P\subset\mathbb R^{2n-1}$ with $a\in P$ we have $P\cap D\neq\varnothing$. Since $\mathcal S_D$ is $\mathbb C$-convex, it is linearly convex. Consequently, by Proposition~\ref{prop:lcstd}, there is an affine real subspace $H$ of the form (\ref{eq:H21}), $\codim_{\mathbb R}(H)=2$, with $a\in H$ and $H\cap D=\varnothing$. In particular, $L:=\Pi^{-1}(H)$ is an affine complex hyperplane in $\mathbb C^n$ (see proof of Proposition~\ref{prop:lcstd}, part (ii)$\Longrightarrow$(i), case $\codim_{\mathbb R}H=2$) with $w\in L$ and $L\cap\mathcal S_D=\varnothing$, i.e.~$[(\alpha',0)]\in\Gamma_{\mathcal S_D}(w)$ for any $w\in\Pi^{-1}(a)$, where $\alpha'=(\alpha_1,\dots,\alpha_{n-1})\in(\mathbb C^{n-1})_*$ is defined via (\ref{eq:alpha'}).

Without loss of generality we may assume that
\begin{equation*}
t_0:=\sup\{t\geq0:[a,(a',a_{2n-1}+t)]\subset\partial D\}\in\mathbb R,
\end{equation*}
where $[x,y]:=\{\lambda y+(1-\lambda)x:\lambda\in[0,1]\}$ denotes the segment with endpoints $x$ and $y$. Let $\tilde a:=(a',a_{2n-1}+t_0)$. Observe that $\tilde a\in H\cap\partial D$. Set $\tilde w:=\iota(\tilde a)$ and note that $\tilde w\in\iota(\partial D)$ and $\tilde w\in\Pi^{-1}(\tilde a)$. We consider two cases.

\emph{Case 1}. There is $[\beta]\in\Gamma_{\mathcal S_D}(\tilde w)$ with $\beta_n\neq0$. Since $\Gamma_{\mathcal S_D}(\tilde w)$ is connected and $[(\alpha',0)]\in\Gamma_{\mathcal S_D}(\tilde w)$, there is a sequence $([\beta^k])_{k\in\mathbb N}\subset\Gamma_{\mathcal S_D}(\tilde w)$ with $\beta^k_n\neq0$, $k\in\mathbb N$, such that $\lim_{k\to\infty}[\beta^k]=[(\gamma',0)]$ for some $[(\gamma',0)]\in\Gamma_{\mathcal S_D}(\tilde w)$. Observe that
\begin{equation*}
L_k=\{z\in\mathbb C^n:\beta^k\bullet(z-\tilde w)=0\},\quad k\in\mathbb N,
\end{equation*}
is an affine complex hyperplane such that $\tilde w\in L_k$ and $L_k\cap\mathcal S_D=\varnothing$. In particular, $H_k:=\Pi(L_k)$ is an affine real subspace of $\mathbb R^{2n-1}$, $\codim_{\mathbb R}H_k=1$, such that $\tilde a\in H_k$, $H_k\cap D=\varnothing$, $k\in\mathbb N$ (see proof of Proposition~\ref{prop:lcstd}, part (i)$\Longrightarrow$(ii)). Since the Grassmann manifold $\Gr(2n-2,\mathbb R^{2n-1})$ is compact (cf.~Lemma~\ref{lem:compact}), we may assume that $\lim_{k\to\infty}H_k=\tilde H$, where $\tilde H$ is an affine real subspace of $\mathbb R^{2n-1}$, $\codim_{\mathbb R}\tilde H=1$, with $\tilde a\in\tilde H$, $\tilde H\cap D=\varnothing$. It remains to observe that $a\in\tilde H$ (indeed, the equality $\lim_{k\to\infty}\beta_n^k=0$ implies that the equation of $\tilde H$ does not depend on the last, $2n-1$st, variable). In other words, $\tilde H$ is an affine real hyperplane in $\mathbb R^{2n-1}$ passing through $a$ and disjoint from $D$, which contradicts the choice of $a$.

\emph{Case 2}. $\beta_n=0$ for any $[\beta]\in\Gamma_{\mathcal S_D}(\tilde w)$. Consequently, there is no affine real hyperplane $\tilde P\subset\mathbb R^{2n-1}$ with $\tilde a\in\tilde P$, $\tilde P\cap D=\varnothing$ such that $l:=\{x\in\mathbb R^{2n-1}:x'=a'\}\not\subset\tilde P$ (follow the proof of Proposition~\ref{prop:lcstd}, part (\ref{item:lcii})$\Longrightarrow$(\ref{item:lci}), case $\codim_{\mathbb R}H=1$, with $H$ replaced by $\tilde P$). Moreover, there is no affine real hyperplane $\tilde P\subset\mathbb R^{2n-1}$ with $l\subset\tilde P$, $\tilde a\in\tilde P$, and $\tilde P\cap D=\varnothing$ (such $\tilde P$ would also be a supporting hyperplane for the point $a$---a contradiction). Hence we conclude that there is no affine real hyperplane containing $\tilde a$ and disjoint from $D$.

In what follows we shall use the following lemma.

\begin{lem}\label{lem:t}For any neighborhood $U$ of $\tilde w$ there is a point $\hat w\in U\cap\iota(\partial D)$ such that there exists a $[\beta]\in\Gamma_{\mathcal S_D}(\hat w)$ with $\beta_n\neq0$.
\end{lem}

We postpone the proof of the Lemma~\ref{lem:t} and continue the proof of Theorem~\ref{thm:ccstd}. Consequently, there are
\begin{itemize}
  \item a sequence $(w^k)_{k\in\mathbb N}\subset\iota(\partial D)$, $\lim_{k\to\infty}w^k=\tilde w$,
  \item a sequence $([\beta^k])_{k\in\mathbb N}\subset\mathbb P^{n-1}$, $\beta_n^k\neq0$, $k\in\mathbb N$, with $[\beta^k]\in\Gamma_{\mathcal S_D}(w^k)$ such that $L_k\cap\mathcal S_D=\varnothing$, where
  \begin{equation*}
  L_k:=\{z\in\mathbb C^n:\beta^k\bullet(z-w^k)=0\},\quad k\in\mathbb N.
  \end{equation*}
\end{itemize}
In particular, $H_k:=\Pi(L_k)$ is an affine real subspace of $\mathbb R^{2n-1}$, $\codim_{\mathbb R}H_k=1$, such that $\Pi(w^k)\in H_k$, $H_k\cap D=\varnothing$, $k\in\mathbb N$. Again, using compactness argument as in Case 1, we may assume that $\lim_{k\to\infty}H_k=H_0$, where $H_0$ is an affine real subspace of $\mathbb R^{2n-1}$, $\codim_{\mathbb R}H_0=1$, with $\tilde a\in H_0$, $H_0\cap D=\varnothing$---a contradiction, since there is no affine real hyperplane containing $\tilde a$ and disjoint from $D$.
\end{proof}

\begin{proof}[Proof of Lemma~\ref{lem:t}]Fix a neighborhood $U$ of $\tilde w$ and an $\varepsilon>0$ such that $\mathbb B(\tilde w,\varepsilon)\subset U$. According to the definition of $\tilde w$, there exists a $0<\tilde t<\varepsilon$ such that
\begin{equation*}
\tilde w(\tilde t):=(\tilde w',\tilde w_n+\tilde t)\in\mathbb C^n\setminus\iota(\overline D).
\end{equation*}
Consequently, there is an $r>0$ such that $\mathbb B(\tilde w(\tilde t),r)\subset\mathbb C^n\setminus\iota(\overline D)$ and $\mathbb B(\tilde w(\tilde t),r)\subset\mathbb B(\tilde w,\varepsilon)$. On the other hand, since $\tilde w\in\iota(\partial D)$, there exists a $\zeta\in\mathbb B(\tilde w,r)\cap\iota(D)$. Set
\begin{equation*}
\hat t:=\sup\{t>0:[\zeta,(\zeta',\zeta_n+t)]\subset\iota(D)\}.
\end{equation*}
Note that $\hat t<\tilde t$. Then $\hat w:=(\zeta',\zeta_n+\hat t)\in\mathbb B(\tilde w,\varepsilon)\cap\iota(\partial D)$ will do the job. Indeed, since $\Pi(\hat w)\in\partial D$, $\Pi(\zeta)\in D$, and $(\Pi(\hat w))'=(\re\zeta_1,\im\zeta_1,\dots,\re\zeta_{n-1},\im\zeta_{n-1})=(\Pi(\zeta))'$, linear convexity of $\mathcal S_D$ and Proposition~\ref{prop:lcstd} imply that there exists an affine real hyperplane $\hat H\subset\mathbb R^{2n-1}$ such that $\Pi(\hat w)\in\hat H$ and $\hat H\cap D=\varnothing$. Moreover, it is of the form
\begin{equation*}
\hat H=\{x\in\mathbb R^{2n-1}:x_{2n-1}=(\Pi(\hat w))_{2n-1}-\hat b\bullet(x'-(\Pi(\hat w))')\}
\end{equation*}
for some $\hat b\in\mathbb R^{2n-2}$. Let $\hat L$ be defined as $L$ in the proof of Proposition~\ref{prop:lcstd}, part (\ref{item:lcii})$\Longrightarrow$(\ref{item:lci}), case $\codim_{\mathbb R}H=1$, with $H$ replaced by $\hat H$ and $a$ replaced by $\Pi(\hat w)$. Consequently, such an $\hat L$ is of the form
\begin{equation*}
\hat L=\{z\in\mathbb C^n:\beta\bullet(z-\hat w)=0\}
\end{equation*}
for some $\beta=(\beta_1,\dots,\beta_n)\in\mathbb C^n$ with $\beta_n=1$. Consequently, $\hat L$ is an affine complex hyperplane such that $\hat w\in\hat L$ and $\hat L\cap\mathcal S_D=\varnothing$, i.e.~$[\beta]\in\Gamma_{\mathcal S_D}(\hat w)$ with $\beta_n\neq0$.
\end{proof}

\bibliographystyle{amsalpha}
\bibliography{bib_pz}

\end{document}